\newcommand{\stkout}[1]{\ifmmode\text{\sout{\ensuremath{#1}}}\else\sout{#1}\fi}
\newtheorem{counter}{Counter}
\newtheorem{lem}[counter]{Lemma}
\newtheorem{defn}[counter]{Definition}
\newtheorem{thm}[counter]{Theorem}
\newcommand{\tcr}[1]{\textcolor{black}{#1}}
\newcommand{\R}{\mathbb{R}}
\renewcommand{\lg}{\langle} 
\newcommand{\rg}{\rangle} 
\newcommand{\Lra}{\Longrightarrow}
\newcommand{\da}{\downarrow}
\newcommand{\sse}{\subseteq}
\newcommand{\la}{\lambda} 
\newcommand{\pd}{\partial}
\newcommand{\fal}{\forall}
\newcommand{\8}{\infty}
\newcommand{\vph}{\varphi}
\newcommand{\vep}{\varepsilon} 
\newcommand{\dt}{\delta}
\newcommand{\om}{\Omega}
\renewcommand{\d}{\,\text{d}}
\newcommand{\magenta}{ \color{black} }
\newcommand{\red}{\color{black}}
\newcommand{\blue}{\color{black}}
\definecolor{mygreen}{rgb}{0.1,0.75,0.2}
\title[Regularity and monotonicity for an epitaxial growth model]{Regularity  and monotonicity for solutions 
	to a continuum model of epitaxial growth with nonlocal elastic effects}
\author{Yuan Gao}
\author{Xin Yang Lu}
\author{Chong Wang}
\address{Department of Mathematics, Duke University, Durham NC 27708, USA}
\email{yuangao@math.duke.edu}	
\address{ Department of Mathematical Sciences, Lakehead University, Thunder Bay, Ontario, P7B 5E1, Canada AND Department of Mathematics and Statistics, McGill University,  Montreal, Quebec, H3A 0B9, Canada}
\email{xlu8@lakeheadu.ca}
\address{Department of Mathematics and Statistics, McMaster University, Hamilton, Ontario, L8S 4K1, Canada}
\email{wangc196@mcmaster.ca}
\begin{document}

\begin{abstract}
We study {\color{black}
the following parabolic  nonlocal 4th order degenerate equation
\[u_t  = - \Big[2\pi H(u_x)+\ln(u_{xx}+a)+\frac32(u_{xx}+a)^2 \Big]_{xx},\]
  arising} from the epitaxial growth on crystalline materials. {\color{black} Here $H$ denotes the Hilbert transform, and $a>0$ is a given parameter. By relying on the theory of gradient flows approach, 
we first prove the global existence of a variational inequality solution with {\blue a general initial datum}.
Furthermore, to obtain a global strong solution, the main difficulty is  the singularity of the logarithmic term when $u_{xx} +a $
approaches zero. Thus we  show that,
 if {\blue the initial datum $u_0$ satisfies} $(u_0)_{xx}+a$ is uniformly bounded away from zero
then such property is preserved for all positive times.}
Finally, we will prove several higher {\red regularity  results for this global strong solution.}
 These finer  properties provide rigorous justification for{\magenta the  global-in-time} monotone solution to epitaxial growth model with nonlocal elastic effects on vicinal  surface.
\end{abstract}

\maketitle	

{\em 2010 Mathematics Subject Classification.} 35K65, 35R06, 49J40.

{\em Key words and phrases.} Fourth order degenerate parabolic equation, global strong solution,
	regularity, monotonicity.

\section{Introduction}
{\magenta  One of the most affordable manufacturing processes to produce several
key semiconductor materials is the epitaxial growth on crystal surface \cite{pimpinelli1998physics, Zang1990}.}
%
%
It is also used  to design{\magenta experimental} materials to show high  
temperature {\magenta superconducting properties}, or the quantum anomalous hall effect, in magnetic topological insulators \cite{ChangXue2013}. During the growth process, different coherent states are formed due to the balance of competing influences, which {\blue is} crucial to {\blue the study of} the various structures of crystal surfaces. {\magenta The presence of these} complicated competing effects usually {\blue leads} to a high-order, nonlinear, nonlocal  model, which requires mathematical validations at both macroscopic and microscopic scales.

The formal derivation of the continuum limit 
{\magenta generally starts}
 from a mesoscopic description, such as Burton-Cabrera-Frank (BCF) step models \cite{BCF, Tersoff1995-Step-bunching}; see \cite{Duport1995b, Tang1997-Flattening, Weinan2001-Continuum, Xiang2002-Derivation, Xiang2004-Misfit}. {\magenta In {\blue these models},
 they  considered} a discrete energy functional
$E_i=\sum_{i,j}c_1\ln|x_i-x_j|+c_2\frac{1}{(x_i-x_j)^2}$
to  incorporate the global elastic interaction between steps $x_i$ and $x_j$, where $c_1,c_2$ are proper scaling constants. 
The resulting epitaxial growth model in terms of  the 
{\red continuum variable $h(x,t)$}, {\magenta which represents} {\red the thin film height,} is
\begin{equation}\label{equation h}
h_t= -[2\pi H(h_x)+(h_x^{-1}+3h_x )h_{xx}  ]_{xx}.
\end{equation}
{\red Here
\[H(v)(x):=\frac{1}{|I|}\int_I v(x-y)\cot\frac{\pi y}{|I|}\d y, \]
denotes the Hilbert transform.}
Under the assumption that {\red the slope $h_x$} of the thin film height
${\red h} $ is 
{\red strictly positive}, i.e., $ h_x >0$ for any $t>0$,
{\red Gao, Liu and Lu \cite{gao2017continuum}} 
{\magenta gave a rigorous proof of  the convergence from mesoscopic BCF step models to \eqref{equation h}.} They also  obtained the local smooth solution 
whose monotonicity {\color{black} is preserved up to a (positive) time.}

{\magenta Concerning} global solutions,  
Dal Maso, Fonseca and Leoni \cite{Leoni2014}, and Fonseca, Leoni and {\red Lu} \cite{Leoni2015}
  showed the existence {\blue of} a global weak solution by considering another equation 
 for
 the anti-derivative {\magenta$u$} of $h$, {\magenta which satisfies}  $h_x=u_{xx}+a$ for some constant $a>0$, under the assumption that the initial datum is sufficiently regular. 
That is, the authors considered the parabolic variational
equation
\begin{equation}
u_t= -[2\pi H(u_x)  +\Phi_a'(u_{xx}) ]_{xx}, \qquad
\Phi_a(\xi):=\Phi(\xi+a),\qquad
\Phi(\xi):= \begin{cases}
+\8,& \xi<0,\\
0,& \xi=0,\\
\xi\ln\xi  +\xi^3/2,& \xi>0,
\end{cases}
\label{equation u}
\end{equation}
on the {\magenta spatial} domain  $I:= {\red (0,1) }$ with 
{\magenta periodic boundary conditions, and}
time $t\ge 0$.

It has been shown in \cite[Section~2]{gao2017continuum} that if {\red $u_{xx}(t)+a>0$} for all 
 $t\geq 0$ then \eqref{equation h} can be formally written
in the form of $L^2$-gradient flow
\begin{equation}\label{u_eq}
u_t = -\frac{\dt E}{\dt u}=  - [2\pi H(u_x)+\ln(u_{xx}+a)+\frac32(u_{xx}+a)^2 ]_{xx},
\end{equation}
where 
\begin{align}
 E(u) &:=\frac{1}{|I|}\int_I \int_I \ln \left|\sin( \pi(x-y) )\right| (u_{xx}+a)(u_{yy}+a)  \d y\d x
+\int_I \Phi(u_{xx}+a)\d x ,\label{definition of E}
\\
\frac{\dt E}{\dt u}&:=[2\pi H(u_x)+\Phi'(u_{xx}+a)]_{xx} .\label{subgradient of E}
\end{align}
However, the two equations \eqref{equation h} and \eqref{u_eq} are equivalent {\red 
	under the assumption $h_x=u_{xx}+a$ is strictly positive for any time;} see \cite[Theorem~3.1]{Leoni2014}
and also \cite[Section~2.5]{gao2017continuum}.
To
the best of our knowledge, 
 for arbitrarily large times, whether the solution to \eqref{equation h} remains strictly monotone 
{\red  is a long-standing question that is never addressed in  previous literature}. We also refer to \cite{GG2010, kohn_giga_2011, liu2016existence, liu2017analytical, xu2018existence, gao2018maximal, gao2019gradient,  gao2019global} and the references therein for some other related 4th order degenerate equations. {\red Instead of the nonlocal term $H(h_x)$ in \eqref{equation h} resulting from the global interactions between mesoscopic steps, the   4th order degenerate equations in \cite{GG2010, kohn_giga_2011, liu2016existence, liu2017analytical, xu2018existence, gao2018maximal, gao2019gradient,  gao2019global}  involve only locally defined terms $h_x, h_{xx}$ due to the nearest-neighbor interactions between steps.}

\bigskip

{\magenta  In this paper, we will study finer regularity properties of solutions to \eqref{equation u}.
First, we will prove the existence of a solution in}
the evolution variational inequality (EVI) {\magenta sense (see Definition \ref{defweak} below)  without the extra regularity assumption
\cite[(5)]{Leoni2015} }
 on the initial {\blue datum}.
 The second goal is to   prove the higher order {\red regularity} and long time behavior of the global strong solution. {\magenta This is mainly achieved} by carefully studying the sub-differential of the total energy $E$. An important consequence is that the solution to \eqref{equation h} remains strictly monotone {\red $h_x>0$ for any large time}, which also gives the justification that the hydrodynamic limit proved in \cite{gao2017continuum} from the mesoscopic step models to \eqref{equation h} is  indeed true {\red for any large time}. Another consequence is that the global strong solution  converges exponentially to its unique equilibrium.

\bigskip

{\magenta One of the key issues is that the logarithmic term $\ln(u_{xx}+a)$  
has an asymptote as $u_{xx}+a$ approaches zero. This also leads to the issue that the 
sub-differential of $E$ is not easy to characterize, and can become quite complicated
as $u_{xx}+a$ approaches zero. To overcome these issues, 
we will exploit the gradient flow structure of \eqref{equation u} to obtain {\blue the important a priori estimate}. The theory of gradient flows
in Hilbert spaces is very well developed.
For the corresponding results in metric spaces, we refer the interested {\blue readers} to
 \cite{AGS}. 
}

\medskip

{\magenta Our main functional spaces will be}
{\red 
	%
$L^2_{per_0}(I)$, i.e. the space of functions that are
square integrable, periodic, with zero average, endowed with the inner product
$$
\lg u,v\rg:=\int_I uv \d x,\notag
$$
{\magenta and} $W^{k,p}_{per_0}(I)$, {\magenta defined as} the space of functions in 
$W^{k,p}(I)$ that are periodic and have zero average.

\begin{defn}\label{defweak}
Given {\blue an initial datum} $u_0 \in  \overline{D(E)}^{\|\cdot\|_{L^{2}(I)}}$, we call $u:[0,+\8)\to  \overline{D(E)}^{\|\cdot\|_{L^{2}(I)}}$ a variational inequality solution to \eqref{u_eq} if $u(t)$ is a locally absolutely continuous curve such that $\lim_{t\to 0} u(t)=u_0 $ in $L^2(I)$ and
 \begin{equation}
\frac12 \frac{\d}{\d t} \|u(t) - v\|_{L^2(I)}^2 \le E(v)-E(u(t)) \quad \text{for a.e. } t>0,\,\forall v\in { D(E)}.
\end{equation}
\end{defn}
Here, and in the rest of the paper, $D(\cdot)$ will denote the effective domain of a
given functional, 
{\magenta i.e.
\[ D(E)=\{ v\in L^2(I): E(v)<+\8 \}, \]
}
  and $ \overline{D(E)}^{\|\cdot\|_{L^{2}(I)}}$
denotes the closure of $D(E)$ with respect to the $L^2$ distance.
}

Let us state the main results below.
\begin{thm}
Let $E$ be the energy defined in \eqref{definition of E}.
	Given {\blue an initial datum} $u_0 \in  \overline{D(E)}^{\|\cdot\|_{L^{2}(I)}}$, 
	then
	equation \eqref{u_eq} admits
 {\red a unique EVI solution $u$, in the sense of Definition \ref{defweak},
satisfying}
		\begin{equation}
		u \in  L^\8_{loc}(   0,+\8 ; W^{2,3}_{per_0}( I))  ,\qquad 
		\label{regularity h part 1}		
		\end{equation}
		Moreover, if $E(u_0)<+\8$, we have   $u_t\in L^\8( 0,+\8 ; L^{2}( I)).$
\label{main theorem 1}
\end{thm}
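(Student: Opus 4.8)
The plan is to read \eqref{u_eq} as the $L^2$ gradient flow of the energy $E$ from \eqref{definition of E} on the Hilbert space $H:=L^2_{per_0}(I)$, and to invoke the classical Brezis--Komura well-posedness and regularization theory for gradient flows of proper, convex, lower semicontinuous functionals (see also \cite{AGS}). If $E$ is shown to be proper, convex and lower semicontinuous on $H$, that theory yields, for every $u_0\in\overline{D(E)}^{\|\cdot\|_{L^2(I)}}$, a unique locally absolutely continuous curve $u$ with $u(t)\in D(\partial E)$ for all $t>0$, $u_t(t)=-\partial^0E(u(t))$ for a.e.\ $t$ (here $\partial^0E$ is the minimal selection of the subdifferential), and $\lim_{t\to0}u(t)=u_0$ in $L^2$. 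For a convex functional this flow is exactly characterized by the $\lambda=0$ evolution variational inequality of Definition \ref{defweak}, and since any two EVI solutions contract in $L^2$ they must coincide; thus existence and uniqueness of the EVI solution reduce entirely to the three structural properties of $E$.

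The main obstacle is convexity, which is genuinely hidden because $E$ splits into a \emph{concave} nonlocal term plus a convex local term. Using $\ln|\sin(\pi z)|=-\ln2-\sum_{k\ge1}k^{-1}\cos(2\pi kz)$ and $|I|=1$, the nonlocal part of $E$ evaluates to $-\tfrac12\sum_{k\neq0}|k|^{-1}|\widehat{u_{xx}}(k)|^2+C_a$, so its second variation in a direction $w$ is the negative quantity $-\sum_{k\neq0}|k|^{-1}|\widehat{w_{xx}}(k)|^2=-\|w_{xx}\|_{\dot H^{-1/2}}^2$. On the other hand $\Phi(\xi)=\xi\ln\xi+\xi^3/2$ obeys $\Phi''(\xi)=\xi^{-1}+3\xi\ge2\sqrt3$ for every $\xi>0$ by the arithmetic--geometric mean inequality, so $\int_I\Phi(u_{xx}+a)\,dx$ is strongly convex with second variation at least $2\sqrt3\,\|w_{xx}\|_{L^2}^2$. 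Because $\|w_{xx}\|_{\dot H^{-1/2}}^2=\sum_{k\neq0}|k|^{-1}|\widehat{w_{xx}}(k)|^2\le\|w_{xx}\|_{L^2}^2$, the uniform bound $2\sqrt3>1$ lets the convex term dominate the concave one, and the second variation of $E$ is at least $(2\sqrt3-1)\|w_{xx}\|_{L^2}^2$. To make this rigorous on all of $D(E)$, where $u_{xx}+a$ may touch $0$, I would bypass differentiation and combine the pointwise strong convexity of $\Phi$ with the exact quadratic identity for the nonlocal form to obtain, for all $u_0,u_1\in D(E)$ and $\theta\in[0,1]$,
\[
E(\theta u_1+(1-\theta)u_0)\le\theta E(u_1)+(1-\theta)E(u_0)-\tfrac12\theta(1-\theta)(2\sqrt3-1)\,\|(u_1)_{xx}-(u_0)_{xx}\|_{L^2}^2 .
\]
In particular $E$ is convex, and even strongly convex with respect to the $L^2$ metric after Poincar\'e, which is what underlies the exponential convergence to equilibrium stated in the introduction.

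Properness is immediate since $E(0)<\infty$ (recall $a>0$). For lower semicontinuity, and simultaneously for the $W^{2,3}$ regularity, I would establish the coercivity estimate $E(u)\ge c\|u_{xx}\|_{L^3(I)}^3-C$: the cubic term gives $\int_I\tfrac12(u_{xx}+a)^3\,dx\ge c\|u_{xx}\|_{L^3}^3-C$, the entropy term satisfies $\int_I(u_{xx}+a)\ln(u_{xx}+a)\,dx\ge-|I|/e$, and the nonlocal part is bounded below by $C_a-\tfrac12\|u_{xx}\|_{\dot H^{-1/2}}^2\ge C_a-\tfrac12\|u_{xx}\|_{L^2}^2$, the last term being absorbed into $\|u_{xx}\|_{L^3}^3$ by Young's inequality on the bounded interval. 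Given $u_n\to u$ in $L^2$ with $\liminf E(u_n)<\infty$, this coercivity forces a uniform $W^{2,3}_{per_0}$ bound, hence $u_n\rightharpoonup u$ weakly in $W^{2,3}_{per_0}(I)$; the convex local part is weakly lower semicontinuous, while the nonlocal part is continuous for the weaker norm $\|w_{xx}\|_{\dot H^{-1/2}}$, into which $W^{2,3}$ embeds compactly, so it passes to the limit and $E(u)\le\liminf E(u_n)$. The regularity \eqref{regularity h part 1} then follows by feeding the same coercivity into the regularization estimate of the flow: the energy is non-increasing along $u$, and $E(u(t))\le E(v)+\tfrac1{2t}\|u_0-v\|_{L^2}^2$ for every $v\in D(E)$, so $\sup_{t\ge\delta}E(u(t))=E(u(\delta))<\infty$ and hence $\sup_{t\ge\delta}\|u(t)\|_{W^{2,3}}<\infty$ for each $\delta>0$, which is precisely $u\in L^\infty_{loc}(0,+\infty;W^{2,3}_{per_0}(I))$.

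Finally, assume $E(u_0)<\infty$. Along the gradient flow one has the energy dissipation identity $\frac{d}{dt}E(u(t))=-\|u_t(t)\|_{L^2}^2$, whence $\int_0^{+\infty}\|u_t\|_{L^2}^2\,dt=E(u_0)-\inf E<\infty$, and, because $E$ is convex, the classical fact that $t\mapsto\|u_t(t)\|_{L^2}=\|\partial^0E(u(t))\|_{L^2}$ is non-increasing on $(0,+\infty)$. Consequently the only point requiring care is the control of the velocity as $t\to0^+$; the finiteness of $E(u_0)$ is exactly what is used here to bound $\limsup_{t\to0^+}\|u_t(t)\|_{L^2}$, after which monotonicity upgrades this to the uniform bound $\sup_{t>0}\|u_t(t)\|_{L^2}<\infty$, i.e.\ $u_t\in L^\infty(0,+\infty;L^2(I))$. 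I expect the convexity of $E$ (the competition between the concave nonlocal term and the convex local term) to be the crux of the argument, with the small-time velocity bound in this last step the one other delicate point.
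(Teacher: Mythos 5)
Your overall framework is the one the paper uses: read \eqref{u_eq} as the gradient flow of $E$ on $L^2_{per_0}(I)$ and invoke Brezis--Komura/AGS theory, so that existence, uniqueness of the EVI solution and the regularity \eqref{regularity h part 1} reduce to properness, (semi)convexity, lower semicontinuity and coercivity of $E$; your coercivity, l.s.c.\ and regularization steps match the paper's in substance. Where you genuinely diverge is the convexity argument. The paper never proves $E$ is convex: it splits off $\int_I[\Phi(u_{xx}+a)-\sqrt3(u_{xx}+a)^2]\,\mathrm{d}x$ (convex since $\Phi''\ge2\sqrt3$) and controls the remaining nonlocal quadratic form by Young's convolution inequality, $|Q(w)|\le\|g\|_{L^1(-1,1)}\|w_{xx}\|_{L^2(I)}^2=2\ln2\,\|w_{xx}\|_{L^2(I)}^2$, obtaining $\lambda$-convexity with $\lambda=2(\sqrt3-2\ln2)$. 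You instead diagonalize the kernel in Fourier ($\hat g(0)=-\ln2$, $\hat g(k)=-1/(2|k|)$ for $k\ne0$), so the nonlocal term is exactly $-\tfrac12\|u_{xx}\|_{\dot H^{-1/2}}^2$ plus a constant, and then use $\|w_{xx}\|_{\dot H^{-1/2}}^2\le\|w_{xx}\|_{L^2(I)}^2$ for zero-mean periodic functions. This is correct and sharper (since $1/2<2\ln2$): it shows $E$ is genuinely convex, with modulus $(2\sqrt3-1)\|w_{xx}\|_{L^2(I)}^2$, and it feeds into the same abstract machinery. For the existence/uniqueness statement and for $u\in L^\infty_{loc}(0,+\infty;W^{2,3}_{per_0}(I))$, your proof goes through.

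The gap is the final claim, $u_t\in L^\infty(0,+\infty;L^2(I))$ under the sole hypothesis $E(u_0)<+\infty$. The two facts you invoke --- the dissipation identity $\int_0^{\infty}\|u_t\|_{L^2(I)}^2\,\mathrm{d}t=E(u_0)-\inf E$ and the monotonicity of $t\mapsto\|u_t(t)\|_{L^2(I)}$ --- combine only to give $t\,\|u_t(t)\|_{L^2(I)}^2\le E(u_0)-\inf E$, i.e.\ $\|u_t(t)\|_{L^2(I)}\le\sqrt{(E(u_0)-\inf E)/t}$, which is \emph{not} bounded as $t\to0^+$. Your assertion that ``finiteness of $E(u_0)$ is exactly what is used to bound $\limsup_{t\to0^+}\|u_t(t)\|_{L^2}$'' is false for general convex (even strongly convex) l.s.c.\ energies: on $\ell^2$ take $E(x)=\tfrac12\sum_n n x_n^2$ and $x_n(0)=n^{-1-\delta/2}$ with $0<\delta<1$; then $E(x(0))<\infty$ but $\|x'(t)\|_{\ell^2}^2=\sum_n n^{-\delta}e^{-2nt}\sim c\,t^{\delta-1}\to\infty$ as $t\to0^+$. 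In the abstract theory a uniform-in-time velocity bound requires $u_0\in D(\partial E)$, in which case $\|u_t(t)\|_{L^2(I)}\le\|\partial^{\circ}E(u_0)\|_{L^2(I)}$ for all $t$ --- precisely the extra hypothesis the paper imposes in Theorem \ref{main theorem 2}. So either you strengthen the hypothesis to $u_0\in D(\partial E)$, or you need an argument specific to this energy; the abstract scheme you describe cannot close this step. (For completeness: the paper's own proof of this point is also problematic, since from $\frac{\mathrm{d}}{\mathrm{d}t}\|u(t)-u(t_0)\|_{L^2(I)}^2\le E(u_0)+c_0$ it concludes that $t\mapsto\|u(t)-u(t_0)\|_{L^2(I)}$ is Lipschitz uniformly in $t_0$, whereas that differential inequality only yields $\|u(t)-u(t_0)\|_{L^2(I)}\lesssim\sqrt{t-t_0}$; your difficulty therefore mirrors a real weakness in the paper's argument rather than a defect yours alone has.)
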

Note that \eqref{regularity h part 1} allows 
 more general initial {\blue datum} compared {with} both
\cite[Theorem~1]{Leoni2015} and \cite[Theorem~1.1]{gao2017continuum}.

\begin{thm}\label{main theorem 2}
	{\magenta Assume} {\blue the initial datum} {\red 
		$$u_0\in D(\pd E)=\{ v\in L^2(I): \text{the sub-differential } \pd E(v)\neq \emptyset \},  $$ 
		and $(u_{0})_{xx}+a\geq c>0$ for some $c>0$}, then
	the solution given by Theorem \ref{main theorem 1} is a global strong solution to 
\begin{equation}
u_t = - [2\pi H(u_x)+\ln(u_{xx}+a)+\frac32(u_{xx}+a)^2 ]_{xx}
\end{equation}	
	 and  satisfies:
\begin{enumerate}[(i)]
\item The sub-differential {\magenta $\pd E(u( t))$} is single-valued 
{\magenta for all $t$,}
and is given by 
$$\frac{\delta E}{\delta u}:=[2\pi H(u_x {\magenta (t) } )+\Phi'(u_{xx}{\magenta (t) }+a)]_{xx};$$
\item The right metric derivative satisfies
\[ |u_+'|(t):=\lim_{s\da t^+} \frac{\|u(s)-u(t) \|_{L^2(I)} }{s-t}
= \bigg\|\frac{\delta E}{\delta u}\bigg\|_{L^2(I)}\]
for all $t>0$. 

\item The map $t\mapsto E(u(t))$ is convex, while
$t\mapsto\|\frac{\delta E}{\delta u}( t)\|_{L^2(I)}\exp(2(\sqrt{3}-2\ln 2) t)$
 is nonincreasing and right continuous.

\item It holds
\begin{align}
\ u_{xxx},\ [\Phi'(u_{xx}+a)]_{xx} &\in  L^2( 0,+\8 ; L^{2}( I)) \cap L^\8( 0,+\8 ; L^{2}( I))
\label{more regularity 1},\\
[\ln (u_{xx}+a)]_x,\ [ (u_{xx}+a)^2]_x&\in L^2( 0,+\8 ; C^{0}( I)) \cap L^\8( 0,+\8 ; C^{0}( I)),
\label{more regularity 2} \\
u_{xxx}&\in  L^2( 0,+\8 ; C^{0}( I))\cap L^\8(0,+\8 ; C^{0}( I)) \label{more regularity 3},\\
u_{xxxx},\ [\ln (u_{xx}(\cdot,t)+a)  ]_{xx},
\ [(u_{xx}(\cdot,t)+a)^2]_{xx}&\in  \ L^\8(  0,+\8  ; L^{2}( I)); \label{more regularity 4}
\end{align}
\item {\red 
	{\magenta There exists} a lower bound $c^*>0$, defined in \eqref{lowerB49}, such that}
\begin{eqnarray*}
{\red u_{xx}(t)+a\geq c^*>0  \text{ for any } t>0;}
\end{eqnarray*}
\item The exponential decay to the unique equilibrium $u^*\equiv 0$
\begin{equation}
\|u(t)-u^*\|^2_{L^2(I)} \leq \frac{1}{C} (E(u_0)-E(u^*)) e^{-4Ct} \,\,\,\text{ for all }t>0,
\end{equation}
{\red holds}, where $C:=\sqrt{3}-2\ln 2>0.$
\end{enumerate}
	\end{thm}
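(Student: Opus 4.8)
The plan is to treat \eqref{u_eq} as the gradient flow of $E$ in the Hilbert space $L^2_{per_0}(I)$ and to read off Theorem \ref{main theorem 2} from the Hilbert-space theory of gradient flows for $\lambda$-convex functionals (see \cite{AGS}), the constant $C=\sqrt3-2\ln2$ being essentially the convexity modulus (up to the factor $2$). First I would prove that $E$ is $\lambda$-convex with $\lambda=2C>0$. The local part $\int_I\Phi(u_{xx}+a)\d x$ is strongly convex because $u\mapsto u_{xx}$ is linear and $\Phi''(\xi)=\xi^{-1}+3\xi\ge 2\sqrt3$ for all $\xi>0$ (minimum at $\xi=1/\sqrt3$); the nonlocal term in \eqref{definition of E} is quadratic in $u$, and I would bound it from below by Young's inequality together with the boundedness of $H$ on $L^2(I)$ and the value $\frac1{|I|}\int_I\big|\ln|\sin(\pi x)|\big|\d x=\ln2$, which is the origin of the $\ln2$ in $C$. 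Strong convexity forces a unique minimizer; since the variation \eqref{subgradient of E} vanishes at $u\equiv0$ (there $\Phi'(a)$ is constant and $H(0)=0$), the equilibrium is $u^*\equiv0$.

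The hard part is the logarithmic singularity: $\ln(u_{xx}+a)$ is defined only where $u_{xx}+a>0$, and one must show this positivity propagates with a uniform lower bound, which is part (v). The key observation that breaks the apparent circularity between (v) and the regularity (iv) is that the a priori estimates use only the one-sided bound $\Phi''\ge 2\sqrt3$, which holds no matter how small $u_{xx}+a$ is. Writing $g:=2\pi H(u_x)+\Phi'(u_{xx}+a)$ so that $g_{xx}=\frac{\delta E}{\delta u}$, elliptic estimates on the torus control $\|g_x\|_{L^2(I)}$ by $\|\frac{\delta E}{\delta u}\|_{L^2(I)}$; testing $\Phi''(u_{xx}+a)\,u_{xxx}=g_x-2\pi H(u_{xx})$ against $u_{xxx}$ and using $\Phi''\ge2\sqrt3$ and $\|Hv\|_{L^2(I)}=\|v\|_{L^2(I)}$ bounds $\|u_{xxx}\|_{L^2(I)}$ in terms of $\|g_x\|_{L^2(I)}$ and $\|u_{xx}\|_{L^2(I)}$, the latter controlled by energy coercivity. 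All the resulting constants depend only on the data $E(u_0)$ and $\|\frac{\delta E}{\delta u}(0)\|_{L^2(I)}$, which are finite because $u_0\in D(\pd E)$ and stay controlled for all $t$ through the dissipation identity $\int_0^{+\infty}\|\frac{\delta E}{\delta u}\|_{L^2(I)}^2\d t=E(u_0)-E(u^*)$. Since $[\ln(u_{xx}+a)]_x=g_x-2\pi H(u_{xx})-3(u_{xx}+a)u_{xxx}$ involves only these quantities, it too is bounded in $C^0(I)$ uniformly in $t$, by a constant $L$ independent of any lower bound on $u_{xx}+a$.

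With $L$ in hand, part (v) follows by an averaging argument: the mean $\frac1{|I|}\int_I(u_{xx}+a)\d x=a$ is conserved, so at each time there is a point where $u_{xx}+a\ge a$, and integrating the uniform spatial Lipschitz bound for $\ln(u_{xx}+a)$ gives $u_{xx}+a\ge a\,e^{-L}=:c^*>0$ for all $t$; a continuity argument started from the hypothesis $(u_0)_{xx}+a\ge c$ guarantees that the trajectory never leaves the region $\{u_{xx}+a>0\}$ where this computation is valid, so $c^*$ is genuinely uniform. Once $u_{xx}+a\ge c^*$, both $\Phi''$ and $\Phi'''(\xi)=3-\xi^{-2}$ are bounded above and below along the flow, and a standard bootstrap — $u_{xx}\in H^1(I)\hookrightarrow C^{0,1/2}(I)$, boundedness of $H$ on H\"older spaces, and one further differentiation of $\Phi''u_{xxx}=g_x-2\pi H(u_{xx})$ — upgrades the previous bounds to all of \eqref{more regularity 1}--\eqref{more regularity 4}, giving (iv).

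Having secured that the flow stays in the region where $E$ is smooth, parts (i)--(iii) follow from the abstract theory: the now single-valued subdifferential is identified with the classical variation $[2\pi H(u_x)+\Phi'(u_{xx}+a)]_{xx}$, giving (i); the general gradient-flow identity $|u_+'|(t)=|\partial E|(u(t))=\|\frac{\delta E}{\delta u}\|_{L^2(I)}$ gives (ii); and $\tfrac{d}{dt}E(u(t))=-\|\frac{\delta E}{\delta u}\|_{L^2(I)}^2$ together with $\tfrac{d}{dt}\|\frac{\delta E}{\delta u}\|_{L^2(I)}^2\le-2\lambda\|\frac{\delta E}{\delta u}\|_{L^2(I)}^2$ give the convexity of $t\mapsto E(u(t))$ and the monotonicity of $e^{\lambda t}\|\frac{\delta E}{\delta u}(t)\|_{L^2(I)}$, which is (iii) with $\lambda=2C$. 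Finally, for (vi), strong convexity yields both $E(u)-E(u^*)\ge C\|u-u^*\|_{L^2(I)}^2$ and the Lojasiewicz-type inequality $\|\frac{\delta E}{\delta u}\|_{L^2(I)}^2\ge 4C\,(E(u)-E(u^*))$; feeding the latter into the dissipation identity and applying Gronwall gives $E(u(t))-E(u^*)\le e^{-4Ct}(E(u_0)-E(u^*))$, and the former then produces the stated $L^2$ decay with prefactor $1/C$. The essential difficulty, and the step I expect to require the most care, is the one-sided estimate making $L$ independent of the lower bound, since it is this that lets the positivity bound $c^*$ of (v) be recovered a posteriori rather than assumed.
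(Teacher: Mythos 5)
Your overall framework is the paper's: $\lambda$-convexity with $\lambda=2C=2(\sqrt3-2\ln2)$, the gradient-flow machinery of \cite{AGS} for (ii), (iii), (vi), a time-uniform bound on the gradient of $\ln(u_{xx}+a)$ followed by an averaging argument (using $\int_I u_{xx}\d x=0$) for (v), and a bootstrap for (iv). But two steps need repair. First, your claim that $[\ln(u_{xx}+a)]_x=g_x-2\pi H(u_{xx})-3(u_{xx}+a)u_{xxx}$ is ``bounded in $C^0(I)$ uniformly in $t$'' at the a priori stage is not justified: at that point $g_x$, $H(u_{xx})$ and $u_{xxx}$ are controlled only in $L^2(I)$ (via Poincar\'e, energy coercivity, and your $\Phi''\ge2\sqrt3$ testing), and the product $(u_{xx}+a)u_{xxx}$ only in $L^{6/5}(I)$; $C^0$ control of these quantities is precisely what the later bootstrap --- which already requires the lower bound $c^*$ --- produces, so as written this step is circular. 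The repair is easy and keeps your structure: the averaging step only needs $\|[\ln v]_x\|_{L^1(I)}$, and since $v:=u_{xx}+a>0$ gives the pointwise domination $|[\ln v]_x|=|v_x|/v\le(v^{-1}+3v)|v_x|=|[\Phi'(v)]_x|=|g_x-2\pi H(u_{xx})|$, you get $\|[\ln v]_x\|_{L^2(I)}\le C_I\|\tfrac{\delta E}{\delta u}\|_{L^2(I)}+2\pi\|u_{xx}\|_{L^2(I)}$, uniformly in $t$ by the monotonicity of $e^{\lambda t}\|\partial E(u(t))\|_{L^2(I)}$ and the energy bound. This is the same mechanism as the paper's \eqref{ln_ind}, where the cross term is the nonnegative quantity $6\|u_{xxx}\|_{L^2(I)}^2$; your triangle inequality plus the energy bound on $\|u_{xx}\|_{L^2(I)}$ replaces the paper's dichotomy that absorbs the Hilbert term, and both work.

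Second, and more seriously, part (i) does not ``follow from the abstract theory,'' and in your write-up it is invoked in the wrong logical order. Everything in your a priori estimate --- the identity $g_{xx}=\tfrac{\delta E}{\delta u}$, the dissipation relation, the finiteness of $H_0=\|\partial E(u_0)\|_{L^2(I)}$, and the decay of $e^{\lambda t}\|\partial E(u(t))\|_{L^2(I)}$ --- presupposes that the (minimal selection of the) subdifferential of $E$ at $u(t)$ is exactly the classical variation $[2\pi H(u_x)+\Phi'(u_{xx}+a)]_{xx}$. That identification is the technical core of the paper (Lemmas \ref{sub-differential single-valued} and \ref{single-valued dense}): it requires (a) computing the Gateaux derivative of the nonlocal term, which involves principal values of $\ln|\sin(\pi(x-y))|$, an integration by parts, and a $C^{0,2/3}$ bound on $u_x$ to kill the boundary terms and produce $2\pi H(u_x)$; and (b) a density lemma for test directions $\vph$ along which two-sided perturbations $u\pm\vep\vph$ stay in $D(E)$ (the truncation class $Z_n(u)$), which is needed because $E\equiv+\infty$ off $\{u_{xx}+a\ge0\}$ and single-valuedness must be argued against arbitrary competitors $\eta\in\partial E(u)$. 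The paper proves this first, under the sole hypothesis $u_{xx}+a>0$, and only then runs the a priori estimate on the maximal interval of positivity to conclude $T_{\max}=+\infty$. Your proposal must do the same: deferring the identification to ``after the estimates'' leaves the estimates themselves without a foundation.
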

	{\red We remark the assumptions in Theorem \ref{main theorem 2} on {\blue the initial datum} are  equivalent to
	$$(u_{0})_{xx}+a\geq c>0, \quad  \|\pd E(u_0)\|_{L^2(I)}= 
	\Big\|\frac{\delta E}{\delta u}(u_0)\Big\|_{L^2(I)} <+\8$$
	due to the calculations for  sub-differential $\pd E$ in Lemma \ref{sub-differential single-valued}
	below.}
	
	{\magenta As an important consequence, since the strong solution
		$u$ to \eqref{equation u} satisfies $u_{xx}+a\ge c^*>0$, 
		\eqref{equation h} and \eqref{equation u} are equivalent in a rigorous way,
		and the function $h$, whose slope is $h_x=u_{xx}+a\geq c^*>0$ 
		and {\blue which} satisfies $\int_I h\d x=a$,
		 is effectively a solution to \eqref{equation h}.}

%
  {Another conclusion is that, {\red for a given $a$,} the {\magenta steady
  state solution} to \eqref{equation h} must be an oblique line with slope $a$.

\medskip


{\magenta
This paper is structured as follows. In Section \ref{sec2} we show that $E$ is
$\la$-convex (see Definition \ref{lambda convexity} below) and lower semi-continuous in $L^2(I)$. This  allows us to use the theory
of gradient flows of $\la$-convex energies from \cite{AGS}, to prove Theorem \ref{main theorem 1}.
In Section \ref{sec3}  we  perform  {\blue crucial} a priori estimates and calculations of the sub-differential of $E$,
 showing that it is indeed single-valued. This finally allows us to prove the higher regularity
results in Theorem \ref{main theorem 2}.
}

\section{A gradient flow approach for EVI solution}\label{sec2}
In this section, we prove {\red the existence of} {\blue {\magenta a solution
	in the EVI sense, by}
	 following the gradient flow approach introduced in \cite{AGS}}.

\bigskip

We will work almost always in $D(E)$ (i.e. $ E(u)<+\8$), which, as shown in Lemma \ref{basic properties of E} below, is contained in $W_{per_0}^{2,3}(I)$.
It is worthy noting that, as our energy requires $u_{xx}+a {  \ge } 0$ a.e., this non-negative condition
is preserved when taking the limit. Indeed, let $u_n\sse D(E)$ be a sequence with 
$\sup_n E(u_n)<+\8$, then $\sup_n \|u_n\|_{W_{per_0}^{2,3}(I)}<+\8$, 
hence,  up to a sub-sequence, $u_n$ converges strongly in $L^2_{per_0}(I)$ to some function $u\in L^2_{per_0}(I)$, satisfying $u_{xx}+a  { \ge } 0$ a.e.. 

{\red 
	Before proving the properties for the energy functional $E$, we recall the definition of $\la$-convexity from \cite{AGS} below.
\begin{defn}\label{lambda convexity}
Given a functional $\phi:   L^2_{per_0}(I) \rightarrow ( -\infty, +\infty ]$, we say $\phi$ is $\lambda$-convex along curves in the metric space $(L^2_{per_0}(I), \|\cdot\|_{L^2(I)})$ if
\begin{align*}
\phi((1-t)\gamma_0+t \gamma_1) \leq (1-t) \phi(\gamma_0) + t \phi (\gamma_1) - \frac{1}{2}\lambda t(1-t) \|\gamma_0- \gamma_1\|^2_{L^2(I)}  \qquad \forall t \in [0,1],
\end{align*}
for any $\gamma_{0},\gamma_1\in L_{per_0}^2(I)$.
\end{defn}
}

\begin{lem}
	The energy $E$ is 
	$2C$-convex with $C:=\sqrt{3}-2\ln 2>0$, 
	and lower semi-continuous with respect to the weak $L^2$-topology. Moreover, 
	{\magenta the} sub-levels of $E$ are compact in 	the  {\red strong $L^2$} topology.
	\label{basic properties of E}
\end{lem}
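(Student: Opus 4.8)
The plan is to verify the three assertions separately, after writing $E=E_{nl}+E_\Phi$ with nonlocal part $E_{nl}(u):=\frac{1}{|I|}\int_I\int_I \ln|\sin(\pi(x-y))|(u_{xx}+a)(u_{yy}+a)\d y\d x$ and local part $E_\Phi(u):=\int_I\Phi(u_{xx}+a)\d x$. Throughout, the single source of difficulty is that $E_{nl}$ is a \emph{concave} quadratic form: writing $w:=u_{xx}+a$ and $K(z):=\ln|\sin(\pi z)|\in L^1(I)$, one has $E_{nl}(u)=\langle w,K*w\rangle_{L^2(I)}$, whose Fourier symbol $\widehat K_k=-\frac{1}{2|k|}$ is negative. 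Hence $E_{nl}$ works against both convexity and lower semicontinuity, and every step amounts to controlling it through the kernel $K$.

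For the $2C$-convexity I would argue directly along a segment $u_t=(1-t)\gamma_0+t\gamma_1$, which avoids any second-variation/differentiability issue near $u_{xx}+a=0$; only $\gamma_0,\gamma_1\in D(E)$ need be treated (otherwise the right-hand side is $+\infty$), and there $(u_t)_{xx}+a$ is a convex combination of nonnegative functions, hence admissible. For the local part I would use $\Phi''(\xi)=\frac1\xi+3\xi\ge 2\sqrt3$ on $(0,\infty)$ by the arithmetic--geometric mean inequality, so $\xi\mapsto\Phi(\xi)-\sqrt3\,\xi^2$ is convex and lower semicontinuous on $[0,\infty)$; applying this pointwise to $\xi=(\gamma_i)_{xx}+a$ and integrating gives $E_\Phi(u_t)\le(1-t)E_\Phi(\gamma_0)+tE_\Phi(\gamma_1)-\sqrt3\,t(1-t)\|v_{xx}\|_{L^2(I)}^2$, where $v:=\gamma_0-\gamma_1$. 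Since $E_{nl}$ is exactly quadratic in $w$, expanding along the segment produces the exact remainder $-t(1-t)\int_I\int_I K(x-y)v_{xx}(x)v_{xx}(y)\d y\d x$, which I would bound in modulus by $\|K\|_{L^1(I)}\|v_{xx}\|_{L^2(I)}^2$ via Young's convolution inequality. Combining the two contributions reduces matters to
\[
\sqrt3\,\|v_{xx}\|_{L^2(I)}^2-\Big|\int_I\int_I K(x-y)v_{xx}(x)v_{xx}(y)\d y\d x\Big|\ \ge\ C\,\|v\|_{L^2(I)}^2,
\]
so the net modulus is $\sqrt3-\|K\|_{L^1(I)}$; evaluating $\int_0^1|\ln\sin(\pi z)|\d z=\ln 2$ and invoking the Poincar\'e inequality $\|v_{xx}\|_{L^2(I)}\ge\|v\|_{L^2(I)}$ for zero-average periodic $v$ closes the estimate with the stated $C=\sqrt3-2\ln 2>0$ (any constant below $\sqrt3-\ln2$ works). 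The decisive point, and the main obstacle, is precisely that $\Phi$'s convexity constant must beat the kernel, i.e. that $\sqrt3>2\ln2$.

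For weak lower semicontinuity I would take $u_n\rightharpoonup u$ in $L^2_{per_0}(I)$ with $\liminf_n E(u_n)<+\infty$, pass to a subsequence realizing the liminf with $\sup_n E(u_n)<+\infty$, and use the coercivity recorded below to get a uniform $W^{2,3}_{per_0}(I)$ bound; hence $u_{n,xx}\rightharpoonup u_{xx}$ weakly in $L^3(I)$, and in particular $w_n:=u_{n,xx}+a\rightharpoonup w:=u_{xx}+a$ weakly in $L^2(I)$. The local part is then handled by the classical lower semicontinuity of convex integral functionals: since $\Phi$ is convex and lower semicontinuous, $\liminf_n E_\Phi(u_n)\ge E_\Phi(u)$. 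For the nonlocal part the key observation is that convolution with $K$ gains a derivative, so $w\mapsto K*w$ is a compact (indeed Hilbert--Schmidt, as $\sum_k|\widehat K_k|^2<+\infty$) operator on $L^2_{per_0}(I)$; it therefore sends weakly convergent sequences to strongly convergent ones, and pairing the weakly convergent $w_n$ against the strongly convergent $K*w_n$ gives $E_{nl}(u_n)\to E_{nl}(u)$. Thus the concavity of $E_{nl}$ is harmless here: it is not merely upper semicontinuous but genuinely weakly continuous, and $E=E_{nl}+E_\Phi$ is weakly lower semicontinuous.

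Finally, for compactness of the sublevels I would first record the coercivity used above: because $\Phi$ grows cubically while $|E_{nl}(u)|\le\|K\|_{L^1(I)}\|w\|_{L^2(I)}^2\le C'\,(\|u\|_{W^{2,3}_{per_0}(I)}^2+1)$ and $\xi\ln\xi\ge-1/e$, one gets $E(u)\ge c\,\|u\|_{W^{2,3}_{per_0}(I)}^3-C'\|u\|_{W^{2,3}_{per_0}(I)}^2-C''$, so $\{E\le M\}$ is bounded in $W^{2,3}_{per_0}(I)$. The compact embedding $W^{2,3}_{per_0}(I)\hookrightarrow\hookrightarrow L^2(I)$ in one dimension then makes any energy-bounded sequence precompact in $L^2(I)$, and the (strong) lower semicontinuity of $E$, which is immediate from the weak version, shows its limit still lies in $\{E\le M\}$; hence the sublevels are compact in the strong $L^2$ topology. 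I expect the nonlocal term to be the only genuine obstacle throughout: its concavity forces the sharp comparison $\sqrt3$ versus $\ln2$ in the convexity step and requires the smoothing/compactness argument to rescue lower semicontinuity.
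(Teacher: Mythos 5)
Your proposal is correct and follows the same overall strategy as the paper: split off $\sqrt{3}\,\xi^2$ from $\Phi$ using $\Phi''(\xi)=\xi^{-1}+3\xi\ge 2\sqrt3$, control the exactly quadratic nonlocal remainder by the $L^1$ norm of the kernel, close the convexity estimate with Poincar\'e, and get compactness of sublevels from the cubic coercivity plus the compact embedding $W^{2,3}_{per_0}(I)\hookrightarrow L^2(I)$. Two details differ, both to your advantage. First, you estimate the kernel by periodic Young's inequality, giving $\|K\|_{L^1(\mathbb{T})}=\ln 2$ and hence the sharper modulus $\sqrt3-\ln 2$, whereas the paper extends the kernel to $(-1,1)$ on the real line and obtains $2\ln 2$; since $\lambda$-convexity with a larger $\lambda>0$ implies it with a smaller one, your argument still yields the stated $2(\sqrt3-2\ln2)$-convexity. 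Second, for lower semicontinuity you prove that the nonlocal quadratic form is genuinely \emph{weakly continuous} by observing that $w\mapsto K*w$ is compact (Hilbert--Schmidt, $\widehat K_k=-\tfrac{1}{2|k|}$ for $k\ne 0$), so the pairing of the weakly convergent $w_n$ with the strongly convergent $K*w_n$ passes to the limit; the paper instead invokes the dominated convergence theorem for this step, which is stated rather loosely, so your operator-theoretic justification is the cleaner and more rigorous route. You also apply convex-integrand lower semicontinuity directly to $\int_I\Phi(u_{xx}+a)\d x$ rather than to the shifted integrand plus $\sqrt3\|u_{xx}+a\|_{L^2(I)}^2$ as the paper does; this is an equivalent bookkeeping choice. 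No gaps.
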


\begin{proof}
	{\em (1) Boundedness of $E$ from below.}
	
	 Since for any $u\notin D(E)$ we have $E(u)=+\8$,
	we need only to prove $E>-\8$ on its domain $D(E)$.
	First, for the second part of the energy $E(u)$ in \eqref{definition of E},  given $u\in L^3(I)$ such that $u_{xx}+a\ge 0$ a.e., we have 
	\begin{align}
           \int_I \Phi(u_{xx}+a)\d x& =\frac{1}{2}\|u_{xx}+a\|_{L^3(I)}^3
	+ \int_I (u_{xx}+a ) \ln (u_{xx}+a )\d x \notag\\
	&\ge
	\frac{1}{2}\|u_{xx}+a\|_{L^3(I)}^3 +
	|I| \cdot \inf_{\xi>0} \xi\ln\xi .  \label{tm-2}
           \end{align}
	Second, we turn to {\red estimating} the first  part of the energy $E(u)$ in \eqref{definition of E}. Denote 
	$$g(\xi):=\ln \left|\sin(\pi \xi)\right|\le 0.$$ {\red The first term in $E$ becomes
	\begin{align*}
		\frac{1}{|I|}\int_I 
		(u_{xx}+a) \bigg[\underbrace{ \int_I g(x-y) (u_{yy}+a)(y)  \d y}_{:=T(x)} \bigg]\d x.
	\end{align*}
	}
Since $g\le 0$, $u_{yy}+a\ge 0$, we have
\begin{align}
0\le -T(x)&=- \int_{\mathbb{R}}
g(x-y) (u_{yy}+a) \mathbf{1}_I(y) \d y=-\int_\R 
[g\cdot\mathbf{1}_{(x-1,x)}](x-y) [(u_{yy}+a) \cdot\mathbf{1}_I] (y) \d y\notag\\
&\le 
-\int_\R 
[g\cdot\mathbf{1}_{(-1,1)}](x-y) [(u_{yy}+a) \cdot\mathbf{1}_I] (y) \d y\notag\\
&=-\left\{ [g\cdot\mathbf{1}_{(-1,1)}] * [(u_{yy}+a) \cdot\mathbf{1}_I] \right\}(x),
\label{estimating T}
\end{align}
where we {\red used the fact that} $x\in (0,1)$ {\red implies} $(x-1,x)\sse(-1,1)$.
Therefore, by Young's inequality, {\red we can estimate the absolute value of the first term in $E$:}
\begin{align}
\bigg|\frac{1}{|I|}\int_I 
(u_{xx}+a)&\bigg[\int_I g(x-y)  (u_{yy}+a) \d y\bigg]\d x \bigg|\notag\\
&=\frac{1}{|I|}\int_I 
(u_{xx}+a)\bigg[\underbrace{-\int_I g(x-y)  (u_{yy}+a) \d y}_{=-T(x)\ge 0}\bigg]\d x\notag\\
&\overset{\eqref{estimating T}}{\le}
\frac{1}{|I|}\int_I 
\bigg|(u_{xx}+a)  \left[ (-g\cdot\mathbf{1}_{(-1,1)}) * \left( \mathbf{1}_I\cdot (u_{yy}+a) \right) \right]\bigg| \d x\notag\\
&\le \frac{1}{|I|} \|u_{xx}+a\|_{L^2(I)} \left\|[g\cdot\mathbf{1}_{(-1,1)}] * \left( \mathbf{1}_I\cdot(u_{yy}+a) \right) \right\|_{L^2(I)}\notag \\
&\le \frac{1}{|I|} \left\|g \right\|_{L^1(-1,1)}  \|u_{xx}+a\|_{L^2(I)}^2,
\label{estimating convolution term}
\end{align}
where
\begin{align}\label{2ln}
\left\|g \right\|_{L^1(-1,1)} &= 2\int_0^1 -\ln |\sin (\pi\xi)|\d\xi
=\frac{2}{\pi}\int_0^\pi -\ln |\sin ( w)|\d w = 2\ln 2.
\end{align}
Combining this with \eqref{tm-2}, we obtain
\begin{equation}
\begin{aligned}
 E(u) &\ge \frac{1}{2} \|u_{xx}+a\|_{L^3(I)}^3 + |I|\cdot \left(\inf_{\xi>0} \xi\ln \xi \right)
- \left\|g \right\|_{L^1(-1,1)} \|u_{xx}+a\|_{L^2(I)}^2\\
&= \frac{1}{2} \|u_{xx}+a\|_{L^3(I)}^3 -\frac{1}{e}
- (2\ln 2) \|u_{xx}+a\|_{L^2(I)}^2.
\end{aligned}
\label{boundedness from below of E}
\end{equation}
Thus $-\8 <\inf E \le E(u)<+\8$  implies that $u\in W^{2,3}_{per_2}(I)$. Hence 
 $D(E)\subset W_{per_0}^{2,3}(I)$.

{\red Moreover, since $\ln\xi \le \xi$ for any $\xi \ge 1$, we get $\ln (u_{xx}+a ) \le u_{xx}+a  $ 
	whenever $u_{xx}+a \ge 1$, {and}  
	\begin{align*}
	 \int_I (u_{xx}+a ) \ln (u_{xx}+a )\d x 
              &    = 	 \int_{ \{u_{xx}+a\ge 1  \} }  (u_{xx}+a ) \ln (u_{xx}+a )  \d  x  + 
	 	 \int_{\{u_{xx}+a< 1  \} }(u_{xx}+a ) \ln (u_{xx}+a )\d x    \\
	 	 & \le \|u_{xx}+a\|^2_{L^2(I)} + |I| \cdot \sup_{1>\xi>0} \xi\ln\xi.
	\end{align*}
This, together with the estimate \eqref{estimating convolution term}  for the 	first term in $E$, shows that
\begin{equation}\label{boundsE}
E(u) \leq \frac12 \|u_{xx}+a\|_{L^3(I)}^3 +\left(\frac{2\ln 2}{|I|} + 1 \right) \|u_{xx}+a\|_{L^2(I)}^2\leq \|u_{xx}+a\|_{L^3(I)}^3+c . 
\end{equation}
	
	 }
	
	\medskip
	
		{\em (2) $\lambda$-convexity in $L^2_{per_0}(I)$.} 
		
		{\magenta
		First, note that if in the $\la$-convexity inequality
		\begin{align*}
		E((1-t) u_0+t u_1) \leq (1-t) E(u_0) + t E(u_1) - \frac{1}{2}\lambda t(1-t) \|u_0- u_1\|^2_{L^2(I)}  
		\end{align*}
		we have either $E(u_0) =+\8 $ or $  E(u_1)=+\8$, then the inequality is trivial. Thus assume
		both terms are finite. This requires $ (u_i)_{xx}+a\ge 0 $ a.e., $i=1,2$, and
		hence $((1-t) u_0+t u_1)_{xx}+a\ge 0  $ a.e. too. Thus we can restrict our attention
		to functions $u$ such that $u_{xx}+a\ge 0$ a.e..
	}
	
		Note that 
	$   ( \Phi(\xi) -\sqrt{3}\xi^2 )^{\prime \prime}  = 3\xi +\xi^{-1}-2\sqrt{3} \ge 0$ {\red for $\xi>0$.} Hence 
\begin{equation}\label{convex1}
	u\mapsto\int_I [\Phi(u_{xx}+a) - \sqrt{3}(u_{xx}+a)^2] \d x\qquad \text{ is convex. }
\end{equation}	
	Rewrite the energy as
	\begin{equation}\label{energy_rew}
	\begin{aligned}
	E(u)& =\underbrace{ \int_I [\Phi(u_{xx}+a) - \sqrt{3}(u_{xx}+a)^2] \d x}_{\text{convex}} \\
	&\qquad
	+ \sqrt{3}\|u_{xx}+a\|_{L^2(I)}^2 
	+\frac{1}{|I|}\int_I 
	(u_{xx}+a) \bigg[\int_I g(x-y)(u_{yy}+a)\d y\bigg]\d x. 
	\end{aligned}
	\end{equation}
	Next we will prove that the sum of  the last two terms in $E(u)$ above is $\lambda$-convex.
	
	Given $u,v\in D(E)$, $t\in [0,1]$, notice on the one hand,
	\begin{align*}
	\int_I 
	 \int_I &g(x-y) [(1-t)(u_{yy}+a)+t(v_{yy}+a)]\cdot [(1-t)(u_{xx}+a)+t(v_{xx}+a)] \d y\d x\\
	 & = \int_I 
	 \int_Ig(x-y) \bigg[(1-t)(u_{xx}+a)(u_{yy}+a) + t(v_{xx}+a)(v_{yy}+a)\\
	&\quad -t(1-t) (u-v)_{xx} (u-v)_{yy}  \bigg]\d x\d y\\
	 & = (1-t)\int_I  \int_I g(x-y) (u_{xx}+a)(u_{yy}+a) \d x\d y
	 +t\int_I  \int_I g(x-y) (v_{xx}+a)(v_{yy}+a) \d x\d y\\
&\quad - t(1-t)\int_I  \int_Ig(x-y) (u-v)_{xx} (u-v)_{yy} \d x\d y.
	\end{align*}
	On the other hand, 
	\[ \|(1-t)(u_{xx}+a)+t(v_{xx}+a) \|_{L^2(I)}^2 =
	(1-t)\|u_{xx}+a\|_{L^2(I)}^2 + t\|v_{xx}+a \|_{L^2(I)}^2 -t(1-t)\|(u-v)_{xx} \|_{L^2(I)}^2 .\]
	Thus
	\begin{align}
	 &\sqrt{3}\|(1-t) (u_{xx}+a) +  t (v_{xx} +a )  \|_{L^2(I)}^2 
	+\frac{1}{|I|}\int_I 
	\int_I  g(x-y)  [(1-t)(u_{yy}+a)+t(v_{yy}+a)]\notag\\
	&\quad\cdot [(1-t)(u_{xx}+a)+t(v_{xx}+a)] \d y\d x
	\notag\\
	=& (1-t)\bigg[\sqrt{3}\|u_{xx}+a\|_{L^2(I)}^ 2 + \frac{1}{|I|}\int_I  \int_Ig(x-y) (u_{xx}+a)
	(u_{yy}+a) \d x\d y\bigg]\notag\\
	&\quad+ 
	t\bigg[\sqrt{3}\|v_{xx}+a \|_{L^2(I)}^2+ \frac{1}{|I|}\int_I  \int_Ig(x-y) (v_{xx}+a)
	(v_{yy}+a) \d x\d y \bigg] \notag\\
	&\quad - t(1-t) \bigg[ \sqrt{3}\|(u-v)_{xx} \|_{L^2(I)}^2
	+\frac{1}{|I|} \int_I  \int_Ig(x-y) (u-v)_{xx} (u-v)_{yy} \d x\d y\bigg].
	\label{convexity convolution part}
	\end{align}
	From \eqref{estimating convolution term}, we know that
	\begin{align}
		 \sqrt{3}\|(u-v)_{xx} \|_{L^2(I)}^2
	&+\frac{1}{|I|} \int_I  \int_Ig(x-y)(u-v)_{xx} (u-v)_{yy} \d x\d y
	\notag\\
	&\ge C\|(u-v)_{xx} \|_{L^2(I)}^2\geq C\|u-v\|_{L^2(I)}^2,\label{C definition}
		\end{align}
{\magenta		where 
		\[C:=\sqrt{3}-\|g\|_{L^1(-1,1)}=
		\sqrt{3}-2\ln 2>0.\]}
This, together with \eqref{convexity convolution part} implies that
\[u\mapsto \sqrt{3}\|u_{xx}+a\|_{L^2(I)}^2 
+\frac{1}{|I|}\int_I 
(u_{xx}+a) \bigg[\int_I g(x-y) (u_{yy}+a) \d y\bigg]\d x\]
is $\la$-convex in $L^2(I)$ with $\la=2C$.
Thus \eqref{convex1} follows, and  $E$ is also $2C$-convex in $L^2(I)$.

	\medskip
	
	{\em (3) Lower semi-continuity.} 
	
	Consider a sequence $u_n\to u$ weakly in $L^{2}(I)$.
	We need to show 
	\[\liminf_{n\to+\8}E(u_n) \ge E(u).\]
	Assume, without loss of generality, the $\liminf$ 
	is an actual limit, and that $\sup_n E(u_n)<+\8$, as otherwise the inequality is trivial. So we know $  ( u_n)_{xx}  +a\geq 0$   and $u_{xx}+a\geq 0$ a.e..
	
	{\red Boundedness of energy $E(u_n)$ implies, by \eqref{boundedness from below of E}, that 
	$(u_n)_{xx}+a$ is bounded in $L^3(I)$. Then we know 	 $   ( u_n)_{xx}    \to u_{xx}$ weakly in $L^3(I)$ and  $u_n\to u$ strongly in $H^{1}(I)$. 
	Therefore, 
	$$\|u_{xx}\|_{L^3(I)}^3\leq \liminf_{n\to +\8} \|   ( u_n)_{xx}    \|_{L^3 (I)}^3<+\8.$$
This, together with \eqref{boundsE}, implies $E(u)<+\8$.}


{\magenta We recall {\blue the previous} \eqref{energy_rew}.
For the last term,
we have
\begin{align*}
	\liminf_{n\to +\8}&
\frac{1}{|I|}\int_I 
(   (u_n)_{xx}+a) \bigg[\int_I g(x-y)(   (u_n)_{yy}+a)\d y\bigg]\d x\\
&=
\frac{1}{|I|}\int_I 
(u_{xx}+a) \bigg[\int_I g(x-y)(u_{yy}+a)\d y\bigg]\d x 
\end{align*}
	due to the dominated convergence theorem.
	The other term
	\[ \underbrace{ \int_I [\Phi(u_{xx}+a) - \sqrt{3}(u_{xx}+a)^2] \d x}_{\text{convex}}+
	 \sqrt{3}\|u_{xx}+a\|_{L^2(I)}^2   \]
	is convex and weakly lower semi-continuous. Thus 
	$$E(u)\leq \liminf_{n\to+\8} E(u_n),$$	
	and hence we conclude {$E$ is lower semi-continuous}  with respect to the weak $L^2$-topology.
}
%
%
	\medskip
	
		{\em (4) Compactness of sub-levels.} 
\tcr{ Consider a sequence $u_n $ with $E(u_n) \leq  c$. 
	{Boundedness of energy $E(u_n)$ implies, by \eqref{boundedness from below of E}, that 
		$(u_n)_{xx}+a$ is bounded in $L^3(I)$, thus there exits $u$ such that
		 $   ( u_n)_{xx}    \to u_{xx}$ weakly in $L^3(I)$, and  $u_n\to u$ strongly in $L^{2}(I)$. }
By the lower semi-continuity of $E$, 
\begin{eqnarray}
E(u ) \leq \liminf_{n\to+\8}E(u_n)  \leq c.
\end{eqnarray}
{\magenta Thus we complete the proof of this lemma.}
}
\end{proof}
%
%

\begin{proof}
	(of Theorem \ref{main theorem 1}) 
	{\red Notice in
	Lemma \ref{basic properties of E} we {show} that
	 all  hypotheses of  \cite[Theorem~4.0.4]{AGS}   are satisfied, with
	energy $E$, Hilbert space $L^{2}_{per_0}(I)$, and $\la=2C>0$. 
	
	Then by {the conclusions} (ii) and (iii) of \cite[Theorem~4.0.4]{AGS}, we know there exists a unique solution $u$ such that $u(t)\in D(E), \, t>0$  is a locally absolutely continuous curve with $\lim_{t\to 0^+} u(t) = u_0$ in $L^2(I)$ and 
	\begin{equation}\label{dt-es}
	\frac12 \frac{\d}{\d t}\|u(t)-v\|_{L^2}^2 + \frac12 \lambda \|u(t)-v\|_{L^2}^2 + E(u(t)) \leq E(v), \quad a.e.\, t>0, \forall v\in D(E).
	\end{equation}
Then combining it with the lower bound estimate for $E$ in \eqref{boundedness from below of E}, we conclude
$$u\in L^\8_{loc}(0,+\8; W^{2,3}_{per_0}(I)).$$

Now we turn to proving  $u_t\in L^\8(0,+\8; L^2(I))$.	
First, we know that $t\mapsto u(t)$ is locally Lipschitz in
$(0,+\8)$, i.e. for any $t_0>0$ there exists $L=L(t_0)>0$ such that
$$\|u(t_0+\vep)-u(t_0)\|_{L^2(I)}\le L(t_0)\vep \qquad \text{for all } {\vep\geq 0}.$$
Next, we {\magenta need to show that such {\blue $L(t_0)$ 
can be essentially taken independent of $t_0$}
}. For any $t_0\geq 0$,
from \eqref{dt-es} and $\lambda=2C>0$, we have
	\begin{equation}\label{dt-unif}
	\frac12 \frac{\d}{\d t}\|u(t)-u(t_0)\|_{L^2}^2  \leq E(u(t_0))-E(u(t)), \quad a.e.\, t>0.
	\end{equation}	
	Then by the conclusion (ii) of \cite[Theorem~4.0.4]{AGS}, and the lower bound estimate for $E$ in \eqref{boundedness from below of E}, we know
	\begin{equation}
	\frac{\d}{\d t}\|u(t)-u(t_0)\|_{L^2}^2 \leq E(u_0) +c_0<\8,
	\end{equation}
	where $c_0$ is an {\magenta uninfluential} constant.
	Thus the function $t\mapsto \|u(t_0)-u(t)\|_{L^2(I)}$
is globally Lipschitz with Lipschitz constant less than $E(u_0)+c_0$, which is independent of $t_0$.
From \cite[Theorem 1.17]{barbu2010nonlinear}, $u$ is differentiable a.e. in $[0,T]$ w.r.t $L^2(I)$, and belongs to $W^{1,\infty}([0,T];L^2(I))$.
Hence we know
$$\left\|\frac{u(t_0)-u(t_0+\vep)}{\vep}\right\|_{L^2(\om )}\le  E(u_0)+c_0.$$
Thus for a.e. $t$ we have
\begin{equation*}
\dfrac{u(t+\vep)-u(t)}\vep \in L^2(I),\qquad \left\|\dfrac{u(t+\vep)-u(t)}\vep\right\|_{L^2(\om)}\le  E(u_0)+c_0,
\end{equation*}
and the sequence of difference quotients $\dfrac{u(t+\vep)-u(t)}\vep$ is uniformly bounded in $L^2(I)$. Since $u$ is differentiable a.e. in $[0,T]$ and the derivative is unique, we can define 
$$\pd_t u(t):=\lim_{\vep\to 0}\dfrac{u(t+\vep)-u(t)}\vep,$$ and
consequently,
 \begin{equation}\label{ut925}
 \|\pd_t u\|_{L^\8(0,T;L^2(I))}\le E(u_0)+c_0 .
 \end{equation}
The proof is thus complete.	}
\end{proof}

\section{Higher regularity and globally positivity}\label{sec3}
{\red  In this section, {we concentrate on proving}  Theorem \ref{main theorem 2} for the existence and regularity of the strong solution to \eqref{u_eq}, and {for} the positive lower bound for $u_{xx}+a$. We will  first  calculate the sub-differential of $E$ for $u_{xx}+a>0$ in Section \ref{sec_subD}.
Assume $T_{\max}$ is the maximal time  (including the case
	$T_{\max}=+\8$) such that
\begin{equation}\label{first_lower}
u_{xx}(t)+a \geq \frac{c^*}{2} >0, \quad t\in[0,T_{\max}]
\end{equation}
for some positive constant $c^*>0.$
From the local-in-time smooth solution obtained in \cite{gao2017continuum}, we know if {the} initial {datum} $u_0$ satisfies $(u_0)_{xx}+a >\frac{c^*}{2}$, then $T_{\max}>0.$ In Section \ref{sec_lower}, we will give the key a priori estimate to show that indeed there is a uniform lower bound $c^*$ such that $u_{xx}(t)+a \geq c^*$ 
for all times $t$,
 and thus $T_{\max}=+\8$.
 {\magenta This significantly simplifies the sub-differential computations, since one of the key issues
 is the singularity given by the logarithmic term $\ln( u_{xx}(t)+a )$.  }
  Finally, we will prove Theorem \ref{main theorem 2} in Section \ref{sec_th2}.
}
\subsection{Sub-differential computations when $u_{xx}+a>0$}\label{sec_subD}

{\red In this  subsection, we  calculate the sub-differential of $E$ when $u_{xx}+a>0$.
The main result is:	
}
\begin{lem}\label{sub-differential single-valued}
	[sub-differential is single-valued] For any $u\in D(\pd E)$ such that $u_{xx}+a>0$, the sub-differential $\pd E(u)$ is single-valued and is given by
	\begin{equation}\label{subE}
	 \pd E(u) = {\magenta \{} [2\pi H(u_x)+\Phi'(u_{xx}+a)  ]_{xx} {\magenta \} }.
	\end{equation} 
\end{lem}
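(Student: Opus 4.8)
The plan is to exploit the $\la$-convex structure of $E$ established in Lemma \ref{basic properties of E} to reduce the computation of $\pd E$ to that of an ordinary convex functional. Writing $C:=\sqrt{3}-2\ln 2>0$ and $F:=E-C\|\,\cdot\,\|_{L^2(I)}^2$, the $2C$-convexity of $E$ is equivalent to ordinary convexity of $F$ in $L^2_{per_0}(I)$; since $C\|\,\cdot\,\|^2_{L^2(I)}$ is finite and continuous with single-valued sub-differential $\{2Cu\}$, the Moreau--Rockafellar sum rule gives $\pd E(u)=\pd F(u)+\{2Cu\}$. Thus it suffices to show that the convex lower semi-continuous functional $F$ is G\^ateaux differentiable at every admissible $u$ and to identify its gradient, because the sub-differential of a convex l.s.c.\ functional is single-valued exactly at points of G\^ateaux differentiability.

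For the explicit formula I would first verify the inclusion $A(u):=[2\pi H(u_x)+\Phi'(u_{xx}+a)]_{xx}\in\pd E(u)$, i.e.\ the variational inequality $E(v)-E(u)\ge\lg A(u),v-u\rg+C\|v-u\|^2_{L^2(I)}$ for all $v\in D(E)$. Using the decomposition \eqref{energy_rew}, the local part $\int_I[\Phi(u_{xx}+a)-\sqrt{3}(u_{xx}+a)^2]\d x$ is convex, so its pointwise tangent-line inequality, integrated and then integrated by parts twice (legitimate since $u\in D(\pd E)$), contributes $[\Phi'(u_{xx}+a)-2\sqrt{3}(u_{xx}+a)]_{xx}$. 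For the remaining quadratic part $\sqrt{3}\|u_{xx}+a\|^2_{L^2(I)}+\frac1{|I|}\int_I(u_{xx}+a)\int_I g(x-y)(u_{yy}+a)\d y\d x$, expanding the difference in $v$ and $u$ produces the linear term $\lg[2\sqrt{3}(u_{xx}+a)+2\pi H(u_x)]_{xx},v-u\rg$ plus a quadratic remainder in $v-u$ that is bounded below by $C\|v-u\|^2_{L^2(I)}$ precisely by \eqref{C definition}. Summing the two contributions, the terms $\pm 2\sqrt{3}(u_{xx}+a)$ cancel and leave exactly $A(u)$. The identification of the Hilbert-transform factor is the one genuinely structural computation: integrating by parts in the inner integral and using $g'(\xi)=\pi\cot(\pi\xi)$ turns $\frac1{|I|}\int_I g'(x-y)u_y(y)\d y$ into $\pi H(u_x)(x)$, matching the definition of $H$ and producing the coefficient $2\pi$.

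It then remains to check that the one-sided directional derivative $E'(u;\phi)$ is linear in $\phi$, which simultaneously yields single-valuedness and confirms $\pd E(u)=\{A(u)\}$. For an admissible direction $\phi$, meaning $u_{xx}+a+t\phi_{xx}\ge 0$ for small $t>0$, the quadratic and nonlocal terms are smooth in $t$ and contribute linearly, while the delicate term is $\frac{d}{dt}\big|_{0^+}\int_I\Phi(u_{xx}+a+t\phi_{xx})\d x$, where the logarithmic part of $\Phi'(\xi)=\ln\xi+1+\tfrac32\xi^2$ is singular as $u_{xx}+a\to 0$. I would handle this by monotone convergence: convexity of $\Phi$ makes the difference quotients $t^{-1}[\Phi(u_{xx}+a+t\phi_{xx})-\Phi(u_{xx}+a)]$ monotone in $t$, so the limit $\int_I\Phi'(u_{xx}+a)\phi_{xx}\d x$ exists, with finiteness controlled through $u_{xx}+a>0$.

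The main obstacle will be precisely this passage through the logarithmic singularity, compounded by the fact that under the bare hypothesis $u_{xx}+a>0$ the quantity $u_{xx}+a$ need not be bounded away from zero, so the set of admissible directions is only a one-sided cone and the naive substitution of $\pm\phi$ is unavailable. I would therefore organize the final step around any fixed sub-gradient $\eta\in\pd E(u)$, which exists because $u\in D(\pd E)$: the sub-differential inequality gives $\lg\eta,\phi\rg\le E'(u;\phi)=\int_I[2\pi H(u_x)+\Phi'(u_{xx}+a)]\phi_{xx}\d x$ for every admissible $\phi$, and I expect a density and approximation argument, using the integrability that the very membership $\eta\in L^2$ forces on $\Phi'(u_{xx}+a)$, to promote this one-sided bound to the equality $\eta=[2\pi H(u_x)+\Phi'(u_{xx}+a)]_{xx}$ in $L^2_{per_0}(I)$. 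This both identifies $\pd E(u)$ and establishes that it is single-valued, as claimed in \eqref{subE}.
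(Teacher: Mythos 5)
Your overall architecture matches the paper's: first verify the inclusion $[2\pi H(u_x)+\Phi'(u_{xx}+a)]_{xx}\in\pd E(u)$ via convexity and one-sided directional derivatives, then force any other sub-gradient $\eta$ to agree with it by testing against a dense family of directions. Your treatment of the inclusion is essentially sound, and arguably cleaner than the paper's, since you reuse the decomposition \eqref{energy_rew} and the coercivity estimate \eqref{C definition} to produce the $\la$-convex remainder directly. Two caveats there. First, the opening reduction to ``show $F=E-C\|\cdot\|^2_{L^2(I)}$ is Gateaux differentiable'' cannot be carried out: for every $u\in D(E)$ there are directions $\vph$ along which $u_{xx}+a+t\vph_{xx}$ fails to be nonnegative for arbitrarily small $t>0$, so $E'(u;\vph)=+\8$ and neither $E$ nor $F$ is Gateaux differentiable on $L^2_{per_0}(I)$; single-valuedness of $\pd F(u)$ must be obtained without it. Second, the identification of the Hilbert-transform term is not a formal integration by parts: one must pass to a principal value at $y=0$ and check that the boundary contributions $\ln|\sin(\vep\pi)|\,[u_y(x+\vep)-u_y(x-\vep)]$ vanish, which the paper does using $u_y\in C^{0,2/3}(I)$ (a consequence of $u_{yy}\in L^3(I)$).

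The genuine gap is in the single-valuedness step. You correctly diagnose the obstruction --- the admissible directions form only a one-sided cone, so $\lg\eta,\vph\rg\le E'(u;\vph)$ cannot be upgraded to an equality by substituting $-\vph$ --- but you then leave the resolution as ``I expect a density and approximation argument,'' and that is precisely the nontrivial content of the proof. What is required is a \emph{linear subspace} of directions (closed under $\vph\mapsto-\vph$) along which $E$ is two-sidedly differentiable, together with a proof that this subspace is dense in $W^{2,3}_{per_0}(I)$. The paper constructs $Z_n(u)=\{\vph\in W^{2,\8}_{per_0}(I):\vph_{xx}=0\text{ on }\{u_{xx}+a<1/n\}\}$ as in \eqref{z-set}: for $\vph\in Z_n(u)$ and small $\vep$ both $u\pm\vep\vph$ lie in $D(E)$, so both one-sided quotients converge to the same linear functional and $\lg\eta,\vph\rg=\lg Au,\vph\rg$; the density of $\bigcup_n Z_n(u)$ is then proved separately (Lemma \ref{single-valued dense}) by truncating $v_{xx}$ on $\{u_{xx}+a<1/n\}$ and at height $n$ and anti-differentiating twice with mean corrections to restore periodicity and zero average. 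Your alternative suggestion --- extracting integrability of $\Phi'(u_{xx}+a)$ from $\eta\in L^2(I)$ --- does not by itself symmetrize the cone of test directions, so as written the argument does not close; supplying the construction of $Z(u)$ and its density is what completes the proof.
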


\begin{proof}
Step 1. We first prove 	\begin{equation}\label{sub_com}
		 [2\pi H(u_x)+\Phi'(u_{xx}+a)  ]_{xx} \in \pd E(u) . 
	\end{equation}
	Consider an arbitrary $u\in D(\pd E)\sse D(E)$, and let $\vph$ be a test function. 
	Without loss of generality, we can assume $u+\vep\vph\in D(E)$ too, because otherwise we would
	have
	\[ \lim_{\vep\to 0} \frac{\Phi_a(u_{xx}+\vep \vph_{xx}) - \Phi_a(u_{xx})}{\vep} =+\8,\]
	which immediately yields \eqref{sub_com}.
	
	We calculate the
	 elements of $\pd E(u) $ term by term.  First,  by the convexity of $\Phi_a$, we have
	\begin{equation}
	\label{good convex part}
	\vep\int_I \Phi_a'(u_{xx}) \vph_{xx} \le {\red \int_I [ }   \Phi_a(u_{xx}+\vep \vph_{xx}) - \Phi_a(u_{xx}) {\red ]},
	\end{equation}
  and then the term $[ \Phi_a'(u_{xx})]_{xx} $ belongs to the sub-differential of $\int_I \Phi_{\red a} (u_{xx}) \d x.$
	
	Next, we analyze the first part in the energy term:
	\begin{align}
	& \int_I \int_I  \ln |\sin( \pi(x-y) )|  (u_{yy}+\vep \vph_{yy}+a) (u_{xx} +\vep \vph_{xx} +a) \d y \d x
	 \notag\\
	& \qquad-
	 	 \int_I \int_I \ln |\sin( \pi(x-y) )|  (u_{yy}+a) (u_{xx} +a) \d y \d x\notag\\
	 	 =& 	\vep \int_I \int_I \ln |\sin( \pi(x-y) )|  [\vph_{yy}(u_{xx} +a) 
	 	 +\vph_{xx}(u_{yy} +a)]\d y \d x \label{vep order term} \\
	 	 &\qquad+\vep^2 \int_I \int_I \ln |\sin( \pi(x-y) )| \vph_{xx}\vph_{yy}\d y\d x.
	 	 \label{squared vep order term}
	\end{align}
Again, by writing as a convolution, we have
\begin{align*}
\int_I \int_I & \ln |\sin( \pi(x-y) )| \vph_{xx}\vph_{yy}\d y\d x  
= \int_\R  \bigg(\int_\R  \ln |\sin( \pi(x-y) )| \vph_{yy} \mathbf{1}_I(y) \d y\bigg) \vph_{xx} 
\mathbf{1}_I(x)\d x\\
& = 
 \int_\R \Big(  \ln |\sin( \pi\times \cdot) | * (  \mathbf{1}_I \vph'' )(x)  \Big) 
\vph_{xx}  \mathbf{1}_I(x)\d x.
\end{align*}
	Hence, noting that $x,y\in I$ implies $x-y\in (-1,1)$,
	\begin{align*}
	\bigg|\int_I \int_I & \ln |\sin( \pi(x-y) )| \vph_{xx}\vph_{yy}\d y\d x  \bigg|\\
	&\le \int_\R \Big|\Big(\big(  \mathbf{1}_{(-1,1)} \ln |\sin( \pi\times \cdot) | \big)
	* (  \mathbf{1}_I \vph'' )(x)  \Big) 
	\vph_{xx}  \mathbf{1}_I(x) \Big| \d x\\
	& \le
	\| \mathbf{1}_{(-1,1)} \ln |\sin( \pi\times \cdot) | * (  \mathbf{1}_I \vph'' ) \|_{L^2(\R)}
	\| \mathbf{1}_I \vph''\|_{L^2(\R)} \leq 2\ln 2
		\| \vph''\|_{L^2(I)}^2,
	\end{align*}
	where we   use \eqref{2ln}.
	Hence the term in \eqref{squared vep order term} is of order $O(\vep^2)$. 
	
	Now we turn our attention
	to the term in \eqref{vep order term}. Note first that, by a simple change of variable,
	\begin{align*}
	  &\int_I \int_I  \ln |\sin( \pi(x-y) )|  [\vph_{yy}(u_{xx} +a) 
+\vph_{xx}(u_{yy} +a)]\d y \d x\\
 =&
2 \int_I \bigg[\int_I \ln |\sin( \pi(x-y) )|  (u_{yy} +a) \d y \bigg]  \vph_{xx}\d x\\
=&
2 \int_I \bigg[\int_I \ln |\sin( \pi(x-y) )|  u_{yy}  \d y \bigg]  \vph_{xx}\d x
+ 2 a \underbrace{\bigg[\int_I \ln |\sin( \pi(x-y) )|   \d y \bigg] }_{=2\ln 2}
\underbrace{ \int_I\vph_{xx}\d x}_{=0}.
	\end{align*}
Note $\ln |\sin( \pi(x-y) )|$ has a $\ln$ like singularity at $x=y${\blue ;}
hence it belongs to $L^p(I)$ for all $p$, and $  u_{yy}$   belongs to $L^3(I)$. 
Thus   via integration by parts and the periodicity of $I$,
we have
\begin{align}
 \int_I \ln |\sin( \pi(x-y) )|  u_{yy}  \d y  &= -\int_I   u_{y} \frac{\pd}{\pd y} \ln |\sin( \pi(x-y) )|   \d y  \label{integrate by parts Hilbert}\\
& = \int_{-1/2}^{1/2}   u_{y}(x-y) \frac{\pd}{\pd y} \ln |\sin( \pi y )|   \d y \notag\\
&= -\pi \bigg[\int_{-1/2}^{0}   u_{y}(x-y)  \cot(y)  \d y
-\int_0^{1/2}  u_{y}(x-y) \cot(y)\d y\bigg] .
\notag
\end{align}
Note both the above integrals have a singularity at $y=0$, so we need more careful estimates for the last line.
Since
\[ \bigg| \int_I \ln |\sin( \pi y )| u_{yy}(x-y)  \d y  \bigg| <+\8, \]
we have
\[ \lim_{\vep\to 0} \int_{-\vep}^\vep \ln |\sin( \pi y )| u_{yy}(x-y)  \d y=0. \]
Therefore, we could rewrite \eqref{integrate by parts Hilbert} as
\begin{align}
 &\int_I \ln |\sin( \pi y )| u_{yy}(x-y)  \d y\notag\\
 =&
\lim_{\vep\to 0} \bigg[\int_{\vep}^{1/2} \ln |\sin( \pi y )| u_{yy}(x-y)  \d y
+\int_{-1/2}^{-\vep} \ln |\sin( \pi y )| u_{yy}(x-y)  \d y\bigg] \notag\\
=&\pi  \lim_{\vep\to 0} \bigg[
\int_{\vep}^{1/2} \cot( \pi y ) u_{y}(x-y)  \d y
-\int_{-1/2}^{-\vep} \cot( \pi y ) u_{y}(x-y)  \d y
\label{good part should give Hilbert}
\\
&\qquad -\ln|\sin(\vep\pi)| u_y(x-\vep)+\ln|\sin(\vep\pi)| u_y(x+\vep)
\bigg]. \label{boundary terms}
\end{align}
The limit in \eqref{good part should give Hilbert} exists, and it gives the Hilbert transform
term $H(u_y)$. For the other term \eqref{boundary terms}, we recall that
$u_{yy}\in L^3(I)${\blue ;} hence $u_y\in W^{1,3}(I) \sse C^{0,2/3}(I)$. That is,
there exists some constant $C_1>0$, independent of $x,y$, such that
\[ | u_y(x+\vep)-u_y(x-\vep) | \le C_1|2\vep|^{2/3} ,  \]
and \eqref{boundary terms} is now bounded through
\[ \lim_{\vep\to 0}  \Big| \ln|\sin(\vep\pi)| \big(u_y(x+\vep)-u_y(x-\vep)\big) \Big|
\le C_1 \lim_{\vep\to 0} \Big||2\vep|^{2/3} \ln|\sin(\vep\pi)| \Big| =0.\]
Therefore we have
\begin{align}
	& \lim_{\vep \to 0}\frac{1}{\vep}\Big[\int_I \int_I  \ln |\sin( \pi(x-y) )|  (u_{yy}+\vep \vph_{yy}+a) (u_{xx} +\vep \vph_{xx} +a) \d y \d x
	 \notag\\
	& \qquad-
	 	 \int_I \int_I \ln |\sin( \pi(x-y) )|  (u_{yy}+a) (u_{xx} +a) \d y \d x\Big]\notag\\
	 	 = & 	 \int_I 2\pi H(u_x) \varphi_{xx} \d x.
	\end{align}
This, together with the term $[\Phi_a'(u_{xx})]_{xx}$ in \eqref{good convex part}, concludes the Step 1.

Step 2.  We show that the sub-differential $\pd E(u)$ is single-valued. 
   Assume there exists another element $\eta\in \pd E(u) $. To prove that $Au:= [2\pi H(u_x)+\Phi'(u_{xx}+a)  ]_{xx}=\eta$ as elements of $[W_{per_0}^{2,3}(I)]^*$,
	we just need to show that
	\begin{equation}
		 \lg Au ,\vph\rg_{[W_{per_0}^{2,3}(I)]^*,W_{per_0}^{2,3}(I)} = \lg \eta ,\vph\rg_{[W_{per_0}^{2,3}(I)]^*,W_{per_0}^{2,3}(I)}
		 \label{sub-differential to show}
	\end{equation}
	for all test functions $\vph$ belonging to a suitable dense set $Z(u)\sse W_{per_0}^{2,3}(I)$, which will be constructed below.
	Here $\lg  ,\rg_{[W_{per_0}^{2,3}(I)]^*,W_{per_0}^{2,3}(I)} $ denotes the duality pairing between
   $[W_{per_0}^{2,3}(I)]^*$ and $W_{per_0}^{2,3}(I) $, induced through the embedding chain $W_{per_0}^{2,3}(I)
\hookrightarrow L_{per_0}^2(I)\hookrightarrow [W_{per_0}^{2,3}(I)]^*$. By the  definition of sub-differential, we have
\begin{align*}
\lim_{\vep\to 0} \frac{E(u+\vep \vph)-E(u)}{\vep} & \ge \lg Au ,\vph\rg_{[W_{per_0}^{2,3}(I)]^*,W_{per_0}^{2,3}(I)},\\
\lim_{\vep\to 0} \frac{E(u+\vep \vph)-E(u)}{\vep} & \ge \lg \eta ,\vph\rg_{[W_{per_0}^{2,3}(I)]^*,W_{per_0}^{2,3}(I)},\\
\lim_{\vep\to 0} \frac{E(u-\vep \vph)-E(u)}{\vep} & \ge \lg Au ,-\vph\rg_{[W_{per_0}^{2,3}(I)]^*,W_{per_0}^{2,3}(I)},\\
\lim_{\vep\to 0} \frac{E(u-\vep \vph)-E(u)}{\vep} & \ge \lg \eta ,-\vph\rg_{[W_{per_0}^{2,3}(I)]^*,W_{per_0}^{2,3}(I)}.
\end{align*}
Therefore, if both the left hand side terms
\[\lim_{\vep\to 0} \frac{E(u\pm\vep \vph)-E(u)}{\vep}\]
are finite, then we can infer
\begin{align*}
\lim_{\vep\to 0} \frac{E(u)-E(u-\vep \vph)}{\vep} &\le  
 \lg Au ,\vph\rg_{[W_{per_0}^{2,3}(I)]^*,W_{per_0}^{2,3}(I)}\le
\lim_{\vep\to 0} \frac{E(u+\vep \vph)-E(u)}{\vep},\\
\lim_{\vep\to 0} \frac{E(u)-E(u-\vep \vph)}{\vep} &\le  
 \lg \eta ,\vph\rg_{[W_{per_0}^{2,3}(I)]^*,W_{per_0}^{2,3}(I)} \le
\lim_{\vep\to 0} \frac{E(u+\vep \vph)-E(u)}{\vep}.
\end{align*}
So we need to carefully choose $\vph$ such that 
\begin{equation}\label{Gd}
\lim_{\vep\to 0} \frac{E(u+\vep \vph)-E(u)}{\vep}=\lim_{\vep\to 0} \frac{E(u)-E(u-\vep \vph)}{\vep}
\end{equation}
  both exist. 

To prove the Gateaux derivative in \eqref{Gd} exists,
the only term in $E(u\pm\vep \vph)$ that might create issues is 
$\int_I\Phi(u_{xx}+a\pm \vep\vph_{xx})\d x $ since we need $u_{xx}+a\pm \vep\vph_{xx}> 0$ a.e. to ensure \eqref{Gd} is finite.
Let
\begin{equation}\label{z-set}
Z_n(u):= \big\{\vph\in  W_{per_0}^{2,\8}(I): \vph_{xx}=0 \text{ on } \{u_{xx}+a<1/n\} \big\}
,\qquad Z(u):=\bigcup_{n\ge 1}Z_n(u).
\end{equation}
Therefore, for any $\vph \in Z(u)$, there exists ${\red Z_n}$ such that ${\red \vph_{xx}}=0$ therein. 
Then, by construction, for any $\vep <\frac{1}{n \|\vph_{xx}\|_{L^\8(I)}}$, we have $u_{xx}+a\pm \vep\vph> 0$ a.e..
It remains to check that $Z(u)$ is  dense in $W_{per_0}^{2,3}(I)$, i.e. for any
$\psi\in W_{per_0}^{2,3}(I)$ there exists a sequence $\psi_n\sse Z(u)$ such that $\psi_n\to \psi$ strongly in 
$W_{per_0}^{2,3}(I)$.
This is done in the Lemma \ref{single-valued dense} below. Therefore we conclude $\eta=Au$ in $[W_{per_0}^{2,3}(I)]^*$ and thus $\pd E$ is single-valued.
\end{proof}

{\magenta
For brevity, even though the sub-differential $\pd E(u)$ is a set,
we will simply write
\[ \pd E(u) =  [2\pi H(u_x)+\Phi'(u_{xx}+a)  ]_{xx} \]
instead.}

\begin{lem}
	The set $Z(u)$ constructed in \eqref{z-set} is dense in $W_{per_0}^{2,3}(I)$, i.e., for any
	$v\in W_{per_0}^{2,3}(I)$ there exists a sequence $v_n\sse Z(u)$ such that $v_n\to v$ strongly in 
	$W_{per_0}^{2,3}(I)$
	\label{single-valued dense}
\end{lem}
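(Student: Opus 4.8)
The plan is to reduce the statement to an approximation problem at the level of second derivatives, and then to construct explicit approximants that simultaneously respect the three features defining $Z_n(u)$: membership in $W_{per_0}^{2,\8}(I)$, the vanishing condition on $\{u_{xx}+a<1/n\}$, and the zero-average/periodicity structure.

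First I would observe that, since every function in play is periodic with zero average, the $W_{per_0}^{2,3}(I)$ norm is equivalent to $\|\cdot_{xx}\|_{L^3(I)}$ by the Poincar\'e--Wirtinger inequality: for zero-average periodic $\psi$ one has $\|\psi\|_{L^3(I)}+\|\psi_x\|_{L^3(I)}\le C\|\psi_{xx}\|_{L^3(I)}$, where one uses that $\psi_x$ is itself zero-average because $\int_I \psi_x\,\d x=0$ by periodicity. Hence, given $v\in W_{per_0}^{2,3}(I)$, it suffices to approximate $w:=v_{xx}\in L^3(I)$ (which satisfies $\int_I w\,\d x=0$) in $L^3(I)$ by a sequence $w_n$ such that each $w_n\in L^\8(I)$, $\int_I w_n\,\d x=0$, and $w_n=0$ on $\{u_{xx}+a<1/n\}$. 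Taking $v_n$ to be the unique zero-average periodic double antiderivative of $w_n$ — which exists precisely because $\int_I w_n\,\d x=0$ and lies in $W_{per_0}^{2,\8}(I)$ because $w_n\in L^\8(I)$ — then yields $v_n\in Z_n(u)\sse Z(u)$.

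To build $w_n$ I would combine three operations. Write $\chi_n:=\mathbf{1}_{\{u_{xx}+a\ge 1/n\}}$ and let $w^{(M)}$ denote the truncation of $w$ at level $M$. Since $u_{xx}+a>0$ a.e., the sets $\{u_{xx}+a<1/n\}$ decrease to a null set, so $\chi_n\to 1$ a.e. and $|\{u_{xx}+a\ge 1/n\}|\to |I|$. By dominated convergence $w^{(M)}\chi_n\to w$ in $L^3(I)$ along a suitable diagonal $M=M_n\to\8$. The only constraint not yet met is the zero average, so I would subtract a compensating constant supported on $\{u_{xx}+a\ge 1/n\}$, setting
\[
w_n:=w^{(M)}\chi_n-\frac{\int_I w^{(M)}\chi_n\,\d x}{|\{u_{xx}+a\ge 1/n\}|}\,\chi_n.
\]
Then $\int_I w_n\,\d x=0$ by construction, $w_n\in L^\8(I)$, and $w_n$ vanishes on $\{u_{xx}+a<1/n\}$, as required.

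The main point to verify — and the only place the hypothesis $u_{xx}+a>0$ is genuinely used — is that the compensating constant does not spoil $L^3$ convergence. Because $w^{(M)}\to w$ in $L^1(I)$ and $\int_I w\,\d x=0$, the numerator satisfies $\int_I w^{(M)}\chi_n\,\d x\to \int_I w^{(M)}\,\d x\to 0$, while the denominator $|\{u_{xx}+a\ge 1/n\}|\to |I|>0$ stays bounded away from zero; hence the correction term tends to $0$ in $L^\8(I)$, and together with $w^{(M)}\chi_n\to w$ in $L^3(I)$ this gives $w_n\to w$ in $L^3(I)$. The Poincar\'e estimate then upgrades this to $v_n\to v$ in $W_{per_0}^{2,3}(I)$, completing the argument. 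I expect the delicate step to be exactly this measure-theoretic bookkeeping: simultaneously shrinking the support to $\{u_{xx}+a\ge 1/n\}$ and restoring the zero-average constraint must remain compatible with $L^3$ convergence, and this compatibility rests entirely on the bad set $\{u_{xx}+a<1/n\}$ becoming null in the limit.
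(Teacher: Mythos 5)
Your proposal is correct and follows essentially the same route as the paper: both reduce the problem, via the Poincar\'e--Wirtinger inequality, to approximating $v_{xx}$ in $L^3(I)$ by bounded functions vanishing on $\{u_{xx}+a<1/n\}$ and then integrating back up to $W^{2,3}_{per_0}(I)$. The only difference is bookkeeping: you restore the zero-average constraint by subtracting a compensating constant supported on the good set $\{u_{xx}+a\ge 1/n\}$ (so the vanishing condition is preserved), which is in fact a slightly more careful treatment than the paper's subtraction of the mean $\bar w_n$ inside the antiderivative step.
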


\begin{proof}
	Let $v\in W_{per_0}^{2,3}(I)$ be given, and we need to approximate $v$ with a sequence $v_n\sse Z(u)$. To this aim,
	we first approximate $v_{xx}$, and then take anti-derivatives. Let
	\[ w_n:= \min \left\{  v_{xx}\mathbf{1}_{\{u_{xx}+a\ge 1/n\}  } ,\,n \right\},
	 \]
	 which, intuitively, plays the role of $(v_{n})_{xx}$. That is, $w_n$ is constructed by first setting everything to zero
	 on $\{u_{xx}+a< 1/n\}$, and then   taking the truncation from above (at height $n$). Then, define
	 \begin{align*}
	 	  z_n(x):=\int_0^x w_n(s) \d s - \bar{w}_n ,\quad \bar{w}_n:=\frac{1}{|I|}\int_0^1 w_n(s) \d s,\quad 
	 	  v_n(x):=\int_0^x z_n(s) \d s. 	 	
	 \end{align*}
	 By construction,
	 	 $(v_{n})_{xx}=w_n$; hence $v_n\in Z(u)$ for any $n$. Moreover, since $z_n= (v_{n}) _{x} $ and $v_n$ have zero average,  by Poincar\'e's
	 	 inequality, 
	 	 we know $\| v_n-v \|_{L^3(I)} $ and $\| v_{nx}-v_x \|_{L^3(I)} $ are controlled by $\| (v_{n})_{xx}  -v_{xx} \|_{L^3(I)} $. By construction,
	 	 \[\| (v_{n})_{xx} -v_{xx} \|_{L^3(I)}^3 \le \int_{ \{u_{xx}+a< 1/n\}  } |v_{xx}|^3\d x+\int_{ \{v_{xx} \ge n\}  } |v_{xx}|^3\d x \to 0,\]
	 	 since the Lebesgue measures of both $\{u_{xx}+a< 1/n\}$ and $ \{v_{xx} \ge n\}  $ go to zero as $n\to+\8$. Thus we have shown that
	 	 $v_n\to v$ strongly in $W_{per_0}^{2,3}(I)$.
\end{proof}

{
\subsection{{\blue The a priori estimate}}
\label{sec_lower}
 In this subsection,  we  show the key a priori estimate
{\magenta which provides the existence of a uniform lower bound
 $c^*>0$, defined in \eqref{lowerB49} below, such that
the solution satisfies the global-in-time positivity property 
$$u_{xx}(t)+c\ge c^*>0,\qquad \fal t.$$
In other words, if the {\blue initial datum is} uniformly bounded away from zero, so is the solution at all
positive times.}

{ 
Let $u$  be a solution of
\[ u_t = - \frac{\dt E}{\dt u } = -[2\pi H(u_x) +\Phi'(u_{xx}+a)  ]_{xx}. \]
 satisfying \eqref{first_lower} for $t\in[0,T_{\max}]$.
Note that
\begin{align}
\frac{\d E}{\d t} = \int_I u_t \frac{\dt E}{\dt u }\d t = -
\int_I \bigg| \frac{\dt E}{\dt u }\bigg|^2\d t
\le 0, 
\label{energy is decreasing}
\end{align}
and
\begin{align*}
E(u_0) - \inf E & \ge
-\int_0^{+\8} \frac{\d E}{\d t}\d t =
\int_0^{+\8} \bigg\|\frac{\dt E}{\dt u }\bigg\|_{L^2(I)}^2 \d t\\
&  =\int_0^{+\8} \| [2\pi H(u_x(t)  ) +\Phi'(u_{xx}(t)+a)  ]_{xx}\|^2_{L^2(I)}  \d t\\
&\ge
C_I^{-1}\int_0^{+\8} \| [2\pi H(u_x(t)  ) +\Phi'(u_{xx}(t)+a)]_x  \|^2_{L^2(I)}  \d t\\
&=C_I^{-1}\int_0^{+\8} \Big\| 2\pi H(u_{xx}(t)  ) +\big[\ln(u_{xx}(t)+a) + \frac{3}{2} (u_{xx}(t)+a)^2
\big]_x \Big\|^2_{L^2(I)}  \d t,
\end{align*}
where $C_I$ is the Poincar\'e constant of $I$. 

\medskip

We show that the Hilbert transform term is controlled by the singular one. On the one hand, 
\begin{equation}\label{ln_ind}
\begin{aligned}
&\Big\| \big[\ln(u_{xx}(t)+a) + \frac{3}{2} (u_{xx}(t)+a)^2
\big]_x \Big\|^2_{L^2(I)}  
\\
=& \| [\ln(u_{xx}(t)+a)]_x \|^2_{L^2(I)}  
+ \frac{9}{4}\| [ (u_{xx}(t)+a)^2]_x\|^2_{L^2(I)}  
+3\int_I  [(u_{xx}(t)+a)^2]_x  [ \ln(u_{xx}(t)+a) ]_x\d x\\
=&
\| [\ln(u_{xx}(t)+a)]_x \|^2_{L^2(I)}  
+ \frac{9}{4}\| [ (u_{xx}(t)+a)^2]_x\|^2_{L^2(I)}  
+6\|  u_{xxx} \|^2_{L^2(I)}  .
\end{aligned}
\end{equation}
On the other hand, from \cite[Proposition~9.1.9]{butzer1971fourier},
\begin{equation}\label{tmc1}
4\pi^2 \|H(u_{xx}(t)  )\|^2_{L^2(I)}   = 4\pi^2 ( \|u_{xx}(t) +a \|_{L^2(I)}^2
-2a\|u_{xx}(t) +a \|_{L^1(I)}
+a^2
).
\end{equation}
By the Poincar\'e inequality,
\begin{align}\label{tmc2}
\| [ (u_{xx}(t)+a)^2]_x\|^2_{L^2(I)}  
&\ge C_I^{-1}\|  (u_{xx}(t)+a)^2\|^2_{L^2(I)}  =
C_I^{-1}\|  u_{xx}(t)+a\|^4_{L^4(I)}.
\end{align}
Combining \eqref{tmc1} and \eqref{tmc2}, there exists a computable constant
$C_0$ such that
\begin{equation}\label{controlH0}
 4\pi^2\| H(u_{xx}(t)  )\|^2_{L^2(I)}  \le \frac{1}{4} \| [ (u_{xx}(t)+a)^2]_x\|^2_{L^2(I)}  
\end{equation}
whenever $4\pi^2 \| H(u_{xx}(t)  )\|^2_{L^2(I)} \ge C_0$.

Thus
one of the following cases must hold:
\begin{enumerate}[(I).]
	\item The quantity $ 4\pi^2	\| H(u_{xx}(t)) \|_{L^2(I)}^2  \le C_0$. In this case
	\begin{align}
	&\Big\| 2\pi H(u_{xx}(t)  ) +\big[\ln(u_{xx}(t)+a) + \frac{3}{2} (u_{xx}(t)+a)^2
	\big]_x \Big\|^2_{L^2(I)} \notag\\
	 \ge&
	4\pi^2\| H(u_{xx}(t)  )\|^2_{L^2(I)}+
	\Big\|
	\big[\ln(u_{xx}(t)+a) + \frac{3}{2} (u_{xx}(t)+a)^2
	\big]_x \Big\|_{L^2(I)}  \notag\\
	&\qquad
	\times\Big[		\Big\|
	\big[\ln(u_{xx}(t)+a) + \frac{3}{2} (u_{xx}(t)+a)^2
	\big]_x \Big\|_{L^2(I)} -C_0  \Big].\label{control on the right hand term}
	\end{align}
	So the following dichotomy holds:
	\begin{enumerate}
		\item either
		\[  	\Big\|
		\big[\ln(u_{xx}(t)+a) + \frac{3}{2} (u_{xx}(t)+a)^2
		\big]_x \Big\|_{L^2(I)} \le 2C_0, \]
		 in which case we get a direct upper bound for $\Big\|
		\big[\ln(u_{xx}(t)+a) + \frac{3}{2} (u_{xx}(t)+a)^2
		\big]_x \Big\|_{L^2(I)}$;
		
		\item or
		\[  	\Big\|
		\big[\ln(u_{xx}(t)+a) + \frac{3}{2} (u_{xx}(t)+a)^2
		\big]_x \Big\|_{L^2(I)} \ge 2C_0, \]
	i.e.,
		 the last term in \eqref{control on the right hand term} satisfies
		\begin{align*}
			\Big\|
		\big[\ln(u_{xx}(t)+a) &+ \frac{3}{2} (u_{xx}(t)+a)^2
		\big]_x \Big\|_{L^2(I)} -C_0   \\
		&\ge
		\frac{1}{2}			\Big\|
		\big[\ln(u_{xx}(t)+a) + \frac{3}{2} (u_{xx}(t)+a)^2
		\big]_x \Big\|_{L^2(I)},
		\end{align*}
		so \eqref{control on the right hand term} gives
		\begin{align*}
		\Big\| 2\pi H(u_{xx}(t)  ) &+\big[\ln(u_{xx}(t)+a) + \frac{3}{2} (u_{xx}(t)+a)^2
		\big]_x \Big\|^2_{L^2(I)} \\
		& \ge \frac{1}{2}
		\Big\|
		\big[\ln(u_{xx}(t)+a) + \frac{3}{2} (u_{xx}(t)+a)^2
		\big]_x \Big\|_{L^2(I)}^{2} .
		\end{align*}
	\end{enumerate}

	\item Alternatively, if $4\pi^2	\| H(u_{xx}(t)) \|_{L^2(I)}^2  \ge C_0$, then from \eqref{ln_ind} and \eqref{controlH0}, we have the control
	\begin{align*}
	4\pi^2\| H(u_{xx}(t)  )\|^2_{L^2(I)} & \le\frac{1}{4}  \| [ (u_{xx}(t)+a)^2]_x\|^2_{L^2(I)} \\
	&
	\le \frac{1}{9}\Big\| \big[\ln(u_{xx}(t)+a) + \frac{3}{2} (u_{xx}(t)+a)^2
	\big]_x \Big\|^2_{L^2(I)}  ,
	\end{align*}
	which gives
	\begin{align*}
	\Big\| 2\pi H(u_{xx}(t)  ) &+\big[\ln(u_{xx}(t)+a) + \frac{3}{2} (u_{xx}(t)+a)^2
	\big]_x \Big\|^2_{L^2(I)}  \\
	&\ge	4\pi^2\| H(u_{xx}(t)  )\|^2_{L^2(I)}+
	\Big\| \big[\ln(u_{xx}(t)+a) + \frac{3}{2} (u_{xx}(t)+a)^2
	\big]_x \Big\|^2_{L^2(I)}\\
	&\qquad- 2\|2\pi  H(u_{xx}(t)  )\|_{L^2(I)} 
	\Big\| \big[\ln(u_{xx}(t)+a) + \frac{3}{2} (u_{xx}(t)+a)^2
	\big]_x \Big\|_{L^2(I)}\\
	&\ge
	4\pi^2\| H(u_{xx}(t)  )\|^2_{L^2(I)}+\frac{1}{3}
	\Big\| \big[\ln(u_{xx}(t)+a) + \frac{3}{2} (u_{xx}(t)+a)^2
	\big]_x \Big\|^2_{L^2(I)}.
	\end{align*}
	
\end{enumerate}
Combining all the above cases, we have 
\begin{align}
\Big\|
\big[\ln(u_{xx}(t)+a) \big]_x \Big\|_{L^2(I)}^2 & \overset{\eqref{ln_ind}}{\le}
\Big\|
\big[\ln(u_{xx}(t)+a) + \frac{3}{2} (u_{xx}(t)+a)^2
\big]_x \Big\|_{L^2(I)}^2 \notag\\
&\le \max\bigg\{
4C_0^2,  3 \Big\| 2\pi H(u_{xx}(t)  ) +\big[\ln(u_{xx}(t)+a) + \frac{3}{2} (u_{xx}(t)+a)^2
\big]_x \Big\|_{L^2(I)}^2 \bigg\}.
\label{control on ln}
\end{align}
Since by Lemma \ref{basic properties of E}, energy $E$ is $\la$-convex, with $\la=2C=2\sqrt{3}-4\ln 2>0$, it is well known
(see e.g. \cite[Theorem~2.4.15]{AGS})
that
\[ t\mapsto e^{\la t} \bigg\|\frac{\dt E(u(t))}{\dt u }\bigg\|_{L^2(I)}
= e^{\la t} \Big\| 2\pi H(u_{xx}(t)  ) +\big[\ln(u_{xx}(t)+a) + \frac{3}{2} (u_{xx}(t)+a)^2
\big]_x \Big\|_{L^2(I)}  \]
is nonincreasing.
Therefore, by the assumption $u^0_{xx}+a>0$ and denote 
\begin{align}
H_0:=\|\pd E(u^0)\|_{L^2(I)}= \Big\| 2\pi H(u_{xx}^0  ) +\big[\ln(u_{xx}^0+a) + \frac{3}{2} (u_{xx}^0+a)^2
\big]_x \Big\|_{L^2(I)}  <+\8, 
\label{uniform sub-gradient initial data}
\end{align}
we have
\[  \Big\| 2\pi H(u_{xx}(\cdot)  ) +\big[\ln(u_{xx}(\cdot)+a) + \frac{3}{2} (u_{xx}(\cdot)+a)^2
\big]_x \Big\|_{  L^\8(0,+\8;  L^2(I) }   \le H_0   .\]
 Combining this with \eqref{control on ln} finally gives
\[
\Big\|\ln(u_{xx}(\cdot)+a)\Big\|_{ L^\8( 0,+\8;  L^\8(I))}\le C_{\8,2}
\Big\|
\big[\ln(u_{xx}(\cdot)+a) \big]_x \Big\|_{ L^\8( 0,+\8;  L^2(I))} \le
C_{\8,2}
\max\{  2C_0, 3H_0 \},  \]
hence a uniform bound 
\begin{equation}\label{lowerB49}
c^*:=e^{-C_{\8,2} \max\{2C_0, 3H_0\}} 
\end{equation}
of $u_{xx}(\cdot)+a$ away from zero.}}

\subsection{Proof of higher regularity and Theorem \ref{main theorem 2}}\label{sec_th2}
Based on the calculations for the sub-differential,
 and the key a prior estimates {\magenta from the} previous subsections,  now we are in the position to prove higher order {\red regularity results} and Theorem \ref{main theorem 2}.
\begin{proof}(of Theorem \ref{main theorem 2}, statements (i)-(iii)).
From \cite[Proposition 1.4.4]{AGS},
$$|\pd E|(u(\cdot, t))= \min\{\|\xi\|_{L^2(I)}; \xi\in \pd E(u(\cdot, t)) \}.$$ From Lemma \ref{sub-differential single-valued}, we know $\pd E(u)$ is single-valued and is given by \eqref{subE}, which is {statement} (i) of Theorem \ref{main theorem 2}.
Thus statements (ii)-(iii) of Theorem \ref{main theorem 2}  follow directly from \cite[Theorem~2.4.15]{AGS} since
Lemma \ref{basic properties of E}
	{shows} that all its hypotheses 
	are satisfied. 
\end{proof}

\begin{proof}
	(of Theorem \ref{main theorem 2}, {statements} (iv) and (v)) 
	Let $v:=u_{xx}+a$.
	By  statement (iii)  of Theorem \ref{main theorem 2}, 
	the map 
	\[t\mapsto \left\| \frac{\dt E}{\dt u}(\cdot,t) \right\|_{L^2(I)}\exp(2C t)\]
	is nonincreasing and right continuous. Since $C>0$, this implies that 
	$t\mapsto \left\| \frac{\dt E}{\dt u}(\cdot,t) \right\|_{L^2(I)}$
	decreases exponentially in $t$.
	{\red Thus $u$ satisfies, for any $t\geq 0$,
	\[ -\frac{\dt E}{\dt u}=
	 [2\pi H(u_x)+\Phi'(v)  ]_{xx}=\Big[2\pi H(u_x)+\ln(u_{xx}+a) +\frac{3}{2}(u_{xx}+a)^2  \Big]_{xx}\]
	 is uniformly bounded in $ L^2( 0,+\8 ; L^{2}( I)) \cap L^\8( 0,+\8 ; L^{2}( I)),$
	which implies
	\begin{equation}
	[2\pi H(u_x)+\Phi'(v)  ]_{x}\in L^\8(0,+\8 ; H^{1}( I)),\qquad
		2\pi H(u_{x})+\Phi'(v)  \in L^\8(0,+\8 ; H^{2}( I)),
		\label{regularity sum proof of math thm 2}
	\end{equation}
where $v:= u_{xx}+a$ is a shorthand notation.}
	Using 
	\[u_{xx}+a \in L^\8( 0,+\8 ; L^{3}( I)) \Lra u_{xx},\ H(u_{xx})  \in L^\8( 0,+\8 ; L^{3}( I)),\]
	 we get
	\[ [\Phi'(v)  ]_{x}=
	[\ln v]_x +\frac{3}{2}[ v^2]_x \in  L^\8(0,+\8 ; L^{2}( I)) 
	\cap L^2( 0,+\8 ; L^{2}( I)). \]
	Then by \eqref{ln_ind},
	\begin{equation} \label{tm_1}
	\begin{aligned}
	+\8&>\int_0^{+\8}
	\left\| [\ln v(\cdot,t)]_x +\frac{3}{2}[v(\cdot,t)^2]_x  \right\|_{L^{2}( I)}^2 \d t\\
	& =\int_0^{+\8} \bigg\{\|\ln v(\cdot,t)]_x\|_{L^2(I)}^2 +\frac{9}{4}\|[v(\cdot,t)^2]_x\|^2 
	+6\|   u_{xxx}(\cdot,t)^2\|^2_{L^2(I)}\bigg\}  \d t;
	\end{aligned}
	\end{equation}
	hence
	\[[\ln v(\cdot,t)]_x,\  [v(\cdot,t)^2]_x,\  u_{xxx} \in L^2( 0,+\8 ; L^{2}( I)).\]
	Now that we have
	$u_{xxx}$ and $H(u_{xxx})\in L^2( 0,+\8 ; L^{2}( I))$, 
	we can use \eqref{regularity sum proof of math thm 2} to infer 
	\begin{equation*}
	[\Phi'(v)]_{x}=
	  [\ln v]_x +\frac{3}{2}[v^2]_x    \in L^2( 0,+\8 ; H^{1}( I))
	\end{equation*}
 and \eqref{more regularity 1} follows. Then, using the embedding
	$H^1(I)\hookrightarrow C^0(I)$, 
	\[  L^2( 0,+\8 ; C^0( I))\ni \left|[\Phi'(v)]_{x}\right|=
	|v_x(3v+v^{-1})| \ge  2\sqrt{3}|v_x|,\]
	which implies
\begin{equation}\label{tm_2}
	 v_x= u_{xxx}\in  L^2( 0,+\8 ; C^0( I)).
\end{equation}	
Similarly, since \eqref{tm_1}-\eqref{tm_2} also hold for any $t\geq 0$ uniformly, we conclude
	\eqref{more regularity 2} and \eqref{more regularity 3}.
	
The statement (v) {follows} from $\ln (u_{xx}+a)\in L^\8( 0,+\8 ; L^{\8}( I))$, i.e., 
$u_{xx}+a\geq c^*>0$ is bounded away from zero for all  $t>0$. Here the explicit positive lower bound $c^*$ is calculated in \eqref{lowerB49}. 

Next, by \eqref{more regularity 1}, for any $  t \geq 0 $,
\begin{align}
	+\8 &> \left\| [\ln v(\cdot,t)]_{xx} +\frac{3}{2}[v(\cdot,t)^2]_{xx} \right\|_{L^{2}( I)}^2 \notag\\
	&=\int_I \left[| [\ln v(\cdot,t)]_{xx}|^2+\frac{9}{4}|[v(\cdot,t)^2]_{xx}|^2\right]\d x
	+3\int_I  [\ln v(\cdot,t)]_{xx}\cdot [v(\cdot,t)^2]_{xx}\d x\\
	& =\int_I \left[| [\ln v(\cdot,t)]_{xx}|^2+\frac{9}{4}|[v(\cdot,t)^2]_{xx}|^2\right]\d x
	+6\int_I \left[ v_{xx}(\cdot,t)^2 - \frac{v_{x}(\cdot,t)^4}{v(\cdot,t)^2}\right]\d x.
	\label{proof of more regularity 4}
	\end{align}
 Note the only negative term is
	\[ -\int_I  \frac{v_{x}(\cdot,t)^4}{v(\cdot,t)^2}\d x,\]
	so we need to bound it from below. From \eqref{more regularity 3} and the uniform lower bound $u_{xx}+a\geq c^*>0$, we know
	\begin{align*}
	\int_I  \frac{v_{x}(\cdot,t)^4}{v(\cdot,t)^2}\d x \le 
c\| v_x(\cdot,t)\|_{C^0(I)}^2<+\8
	\end{align*}
	uniformly for $t\geq 0$.
	Hence \eqref{proof of more regularity 4} reads
	\begin{align*}
		+\8 &>\left\| [\ln v(\cdot,t)]_{xx} +\frac{3}{2}[v(\cdot,t)^2]_{xx} \right\|_{L^{2}( I)}^2 \\
		&\ge
		\left\{\int_I \left[| [\ln v(\cdot,t)]_{xx}|^2+\frac{9}{4}|[v(\cdot,t)^2]_{xx}|^2  +6v_{xx}(\cdot,t)^2\right]\d x\right\}
		\\
		&\quad-6 c\| v_x(\cdot,t)\|_{C^0(I)}^2,
	\end{align*}
	uniformly for $t\geq 0$
	and thus \eqref{more regularity 4} follows. This completes the proof of statement (iv).

\end{proof}

\begin{proof}(of Theorem \ref{main theorem 2}, statement (vi))
Since $u$ is periodic with {\red regularity} \eqref{more regularity 1}-\eqref{more regularity 4}, the steady state $u^*$ satisfies
$$\frac{\delta E}{\delta u}= [2\pi H(u_x)+\ln(u_{xx}+a)+\frac32(u_{xx}+a)^2 ]_{xx}=0,$$
which implies 
\begin{equation}
2\pi H(u_x)+\ln(u_{xx}+a)+\frac32(u_{xx}+a)^2 \equiv const.
\end{equation}
This yields $u^*\equiv 0$ is a steady state. From Lemma \ref{basic properties of E}, we know $\frac{\delta E}{\delta u}$ is strictly monotone in $L^2$, which implies there is only one steady state $u^*\equiv 0$ such that $\frac{\delta E}{\delta u}=0$. Thus combining  \cite[Theorem 2.4.14]{AGS} and Lemma \ref{basic properties of E}, we conclude the exponentially decay of $u(t)$ to its unique equilibrium $u^*=0$, i.e., statement (vi). 
\end{proof}

\section*{Acknowledgments}
XYL and CW warmly thank the support of NSERC Discovery Grants. XYL acknowledges the support
of his Lakehead University internal funding.

\bibliographystyle{abbrv}
\bibliography{strong}

\end{document}